\newtheorem{thm}{Theorem}[section]
\newtheorem{lem}[thm]{Lemma}
\newtheorem{rem}[thm]{Remark}
\newtheorem{prop}[thm]{Proposition}
\newtheorem{cor}[thm]{Corollary}
\begin{document}

\title{Arithmetical pseudo-valuations associated to Dubrovin valuation rings and prime divisors of bounded Krull domains}

\author{F. Van Oystaeyen}
\author{N. Verhulst}
\affil{Universiteit Antwerpen\\
Department of mathematics \& computer science\\
Middelheimlaan 1\\2020 Antwerpen, Belgium}
\date{}
\maketitle

\abstract{We look at value functions of primes in simple Artinian rings and associate arithmetical pseudo-valuations to Dubrovin valuation rings which, in the Noetherian case, are $\mathbb{Z}$-valued. This allows a divisor theory for bounded Krull domains.}

\section{Introduction}

Primes in simple Artinian rings, introduced by Van Oystaeyen and Van Geel, replace noncommutative valuation rings in skewfield. However, the value functions associated to primes have only been studied in skewfields and arithmetical pseudo-valuations have been studied only for orders containing an order with a commutative semigroup of fractional ideals. 

First we consider primes $(R,P)$ where $R$ is a Goldie ring with a simple Artinian quotient ring and such that $R$ is invariant under inner $A$-automorphisms (this is a property of valuation rings in skewfields). We obtain an arithmetical pseudo-valuation (we will abbreviate this to apv) $v:\mathcal{F}(R)\to \Gamma$ where $\mathcal{F}(R)$ is the partially ordered semigroup of fractional ideals of $R$ and $\Gamma$ is a totally ordered semigroup. We characterize the rings for which $\Gamma$ is a group and establish that if $\Gamma$ is an Archimedean group, $R$ is a Dubrovin valuation. If $\bigcap P^{n}=0$, where $P=J(R)$ is the Jacobson radical of $R$, is added to the assumptions on $R$, then $A$ is a skewfield and $R$ is a valuation ring of $A$.

Next we look at Noetherian Dubrovin valuation rings and establish an apv $v:\mathcal{F}(R)\to\Gamma$ where $\Gamma$ is totally ordered such that $P=\left\{a\in A\mid v(RaR)>0\right\}$ and $R=\left\{a\in A\mid v(RaR)\geq 0\right\}$. The case of non-Noetherian Dubrovin valuations allows $P=P^{2}$ and then $v$ cannot exist (since $v(P)=0$ would follow and then $v(I)=0$ for all $I\in\mathcal{F}(R)$). The condition $\bigcap P^{n}=0$ is enough to characterize Dubrovin valuation rings coming from an apv in the well-described way. We may call these the \emph{valued Dubrovin rings}. Our results establish that Noetherian Dubrovin valuations define a $\mathbb{Z}$-valued apv with all the nice properties and so these are good generalizations of discrete valuation rings in skewfields. In the final section we establish that bounded Krull domains have a divisor theory based on those apvs.

\section{Stable fractional primes}

A \emph{prime} in a ring $R$ is a subring $R'$ with a prime ideal $P\subset R'$ such that for any $x,y$ in $R$, $xR'y\subseteq P$ implies either $x\in P$ or $y\in P$. A prime $(P,R')$ is called \emph{localized} if for any $r\in R\setminus R'$, there exist $x,y\in R'\setminus\left\{0\right\}$ with $xry\in R'\setminus P$.
A prime $(P,R')$ in a ring $R$ is called \emph{strict fractional} if for any $r\neq0$ in $R$ there exist $x,y\in R'$ with $0\neq xry\in R'$, e.g. any localized prime is strict fractional. We refer the interested reader to \cite{theBook} for much more information about primes.

\begin{prop}\label{saisunique}
Let $(P,R')$ be a strict fractional prime in a simple Artinian ring $A$. If $R$ is any semisimple Artinian subring of $A$ and $R\neq A$, then $R'$ is not contained in $R$.  
\end{prop}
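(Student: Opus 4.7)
The plan is to argue by contradiction: suppose $R'\subseteq R$. Since $R'\subseteq R$, the strict-fractional property of $R'$ supplies, for every nonzero $r\in A$, elements $x,y\in R'\subseteq R$ with $0\neq xry\in R'\subseteq R$, so it suffices to contradict this weakened property under $R\neq A$.

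Decompose $R=\prod_{i=1}^{k}R_i$ into simple Artinian factors with pairwise orthogonal primitive central idempotents $e_1,\dots,e_k$ summing to $1$. If $k\geq 2$, simplicity of $A$ forces $e_iAe_j\neq 0$ for some $i\neq j$: otherwise each corner $e_iAe_i$ is a two-sided ideal of $A$, hence equals $A$ by simplicity, which would force $e_i=1$ and contradict $k\geq 2$. Picking $0\neq r\in e_iAe_j$ and applying strict fractional yields $x,y\in R'$ with $0\neq xry\in R$; but $xry=(xe_i)r(e_jy)\in e_iAe_j$, while $R\cap e_iAe_j=e_iRe_j=0$, so $xry=0$, a contradiction. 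Hence $k=1$ and $R$ is simple Artinian.

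With $R$ simple Artinian, let $C=C_A(R)$. If $C\not\subseteq R$, pick $c\in C\setminus R$; strict fractional applied to $c$ yields $x,y\in R'$ with $0\neq xcy=cxy\in R'\subseteq R$ (using that $c$ centralizes $R\supseteq R'$). Then $I:=\{u\in R:cu\in R\}$ is a two-sided ideal of the simple ring $R$ containing the nonzero element $xy$, so $I=R$, giving $c=c\cdot 1\in R$, contradicting $c\notin R$.

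The remaining subcase $C=Z(R)$ is the main obstacle. Here my intended approach is to produce an $R$-$R$-bimodule splitting $A=R\oplus M$: any nonzero $a\in M$ would then satisfy $RaR\subseteq M$, hence $RaR\cap R=0$, violating strict fractional. Such a decomposition follows from a Casimir-style projection when $R$ is separable over a suitable central subring of $A$, but the technical difficulty lies in the case $Z(R)\not\subseteq Z(A)$, where the naive $R\otimes_{Z(R)}R^{\mathrm{op}}$-action on $A$ is unavailable; here a finer centralizer analysis, or an alternative use of the extended prime condition on $P$, appears to be needed.
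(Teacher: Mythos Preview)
Your first two reductions are sound: the orthogonal-idempotent argument correctly disposes of the non-simple case, and the centralizer trick handles $C_A(R)\nsubseteq R$. But the proof is genuinely incomplete in the remaining case $C_A(R)=Z(R)$, and you say so yourself. The hoped-for $R$-bimodule complement $A=R\oplus M$ is not available without some separability hypothesis, and nothing in the statement supplies one; the obstruction you flag when $Z(R)\nsubseteq Z(A)$ is real, not cosmetic. Since this case certainly occurs (e.g.\ $R$ a maximal subfield of a noncommutative division algebra $A$, where $C_A(R)=R=Z(R)$), the argument does not close.

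The paper avoids the entire case analysis with a short, uniform maximality argument that uses only that $R$ is Noetherian and semisimple. Fix $a\in A\setminus R$ and choose a left ideal $L\subseteq R$ maximal for the property that $Lay\subseteq R$ for some $y\in R$; semisimplicity gives a decomposition $R=L\oplus U$. For a nonzero $u\in U$ one has $uay\notin R$, and applying strict fractionality to $uay$ yields $x',y'\in R'\subseteq R$ with $0\neq x'uayy'\in R$. But then $(L+Rx'u)a(yy')\subseteq R$, so maximality of $L$ forces $x'u\in L\cap U=0$, contradicting $x'uayy'\neq 0$. No centralizers, no bimodule complements, and no splitting into cases---this single idea covers precisely the situation your approach leaves open.
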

\begin{proof}
Assume $R'\subseteq R$ and pick $a\in A\setminus R$. Since $R$ is Noetherian, we may choose $L$ maximal for the property that $Lay\subseteq R$ for some $y\in R$. Since $R$ is semisimple Artinian, we have $R=L\oplus U$ where $U$ is a left ideal and $uay\notin R$ for every $u\in U$ (otherwise $(L+Ru)ay\subseteq R$ entails $u\in L$ which is a contradiction). There exist $x',y'\in R'$ with $0\neq x'uayy'\in R'\subseteq R$. Since $L$ is maximal for the property that $Layy'\subseteq R$, it follows that $x'u\in L$ but $x'u\in U$, so $x'u=0$ contradicting $x'uayy'\neq 0$.
\end{proof}

A ring is said to be a \emph{Goldie ring} is the set of regular elements satisfies the Ore condition and $S^{-1}R$ is a semisimple Artinian ring.

\begin{prop}\label{Risorder}
If $(P,R')$ is a strict fractional prime in a simple Artinian ring and $R'$ is a Goldie ring, then $S^{-1}R'=A$ where $S$ is the set of regular elements in $R'$. 
\end{prop}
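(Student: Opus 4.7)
The plan is to realise $S^{-1}R'$ as a semisimple Artinian subring of $A$ and then invoke Proposition \ref{saisunique} to conclude $S^{-1}R' = A$. The Goldie hypothesis makes $S^{-1}R'$ semisimple Artinian as an abstract ring, and of course it contains $R'$; the main obstacle is therefore to promote the inclusion $R' \hookrightarrow A$ into an injection $S^{-1}R' \hookrightarrow A$, equivalently to show that each $s \in S$ is already invertible in $A$. A priori regularity in $R'$ does not imply regularity in the larger ring $A$, so this requires work.

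My approach to this step is to combine strict fractionality with the Ore condition. Suppose $s \in S$ and $sa = 0$ for some $0 \neq a \in A$. Strict fractionality provides $x, y \in R'$ with $0 \neq xay \in R'$, and the left Ore condition in $R'$ applied to $(s, x)$ yields $s' \in S$ and $x' \in R'$ with $s'x = x's$. Then
\[
s'(xay) = x'(sa)y = 0,
\]
contradicting the regularity of $s'$ in $R'$ since $xay$ is a nonzero element of $R'$. A mirror-image argument, using the right Ore condition to find $s_1 \in S$ and $b \in R'$ with $ys_1 = sb$, rules out any nonzero $a \in A$ with $as = 0$: one computes $(xay)s_1 = xa(sb) = 0$ and again contradicts regularity. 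Since $A$ is simple Artinian, a regular element is a unit, so $s$ is invertible in $A$.

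Once every element of $S$ is invertible in $A$, the inclusion $R' \hookrightarrow A$ extends uniquely to an injective ring homomorphism $S^{-1}R' \hookrightarrow A$, realising $S^{-1}R'$ as a semisimple Artinian subring of $A$ that contains $R'$. Were $S^{-1}R'$ a proper subring of $A$, Proposition \ref{saisunique} would forbid the trivial inclusion $R' \subseteq S^{-1}R'$; hence $S^{-1}R' = A$.
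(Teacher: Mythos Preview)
Your proof is correct and follows essentially the same route as the paper's: show that each $s\in S$ is regular in $A$ by combining strict fractionality with the Ore condition, deduce that $s$ is invertible in the simple Artinian ring $A$, extend the inclusion to $S^{-1}R'\hookrightarrow A$, and conclude with Proposition~\ref{saisunique}. The paper only checks one side of regularity explicitly (relying implicitly on the fact that one-sided regularity suffices in an Artinian ring), whereas you spell out both directions; otherwise the arguments are the same.
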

\begin{proof}
Let $r\in S$, then $r$ is regular in $A$ because if $ru=0$ then $u\in A\setminus R'$ so there exist $x,y\in R'$ with $0\neq xuy\in R'$. Since $S$ satisfies the Ore condition, there are $x'\in R'$ and $r'\in S$ with $r'x=x'r$ so $r'xuy=x'ruy=0$ hence $xuy=0$ which is a contradiction. Since $A$ is simple Artinian, $r^{-1}\in A$ so $S$ is invertible in $A$ and $R'\hookrightarrow A$ extends to $S^{-1}R'\hookrightarrow A$. Since $S^{-1}R'$ is semisimple Artinian, the preceding proposition implies that $S^{-1}R'=A$.
\end{proof}

\begin{rem}
If $R$ is a Goldie prime ring and $I$ is an essential left ideal of $R$ then $I$ is generated by the regular elements of $I$. (See \cite{McCR}.)
\end{rem}

Consider a strict fractional prime $(P,R)$ of a simple Artinian ring $A$ with $R=A^{P}$, i.e. $R=\left\{a\in A\mid aP\subseteq P\text{ and }Pa\subseteq P\right\}$. We always assume that $R$ is Goldie hence a prime ring and an order of $A$ (by proposition \ref{Risorder}). If $P$ is invariant under innner automorphisms of $A$, we say that $(P,R)$ is an \emph{invariant prime} of $A$.

\begin{rem}
If $(R,P)$ is an invariant prime in $A$, then $R$ is invariant under inner automorphisms of $A$.
\end{rem}
\begin{proof}
Consider $u\in A^{*}$. For $p\in P$ we have $uRu^{-1}p=uRu^{-1}puu^{-1}$ and $u^{-1}pu\in P$ so $Ru^{-1}pu\subseteq P$ and $uRu^{-1}p\subseteq uPu^{-1}\subseteq P$. Hence $uRu^{-1}P\subseteq P$ which implies $uRu^{-1}\subseteq R$. A similar reasoning gives $PuRu^{-1}\subseteq R$.
\end{proof}

By a \emph{fractional $R$-ideal} of $A$ we mean an $R$-bimodule $I\subseteq A$ such that $I$ contains a regular element of $R$ and for some $r\in R$, $rI\subseteq R$. Observe that we may choose $r$ regular since $R$ is an order. We will denote the set of fractional ideals of $R$ by $\mathcal{F}(R)$.

\begin{lem}\label{IJinPthenJIinP}
With assumptions as before, we have:
\begin{enumerate}[label=(\arabic*)]
\item If $u$ is regular and $uI\subseteq R$ then $Iu\subseteq R$. Also: $uI\subseteq P$ if and only if $Iu\subseteq P$.
\item If $I,J\in\mathcal{F}(R)$, then $IJ\subseteq P$ if and only if $JI\subseteq P$.
\item If $I,J\in\mathcal{F}(R)$ then $IJ\subseteq R$ implies $JI\subseteq R$ and vice versa. Moreover, if $J\nsubseteq P$ then $I\subseteq R$ and if $P\nsupseteq I\subseteq R$ then $J\subseteq R$.
\end{enumerate}
\end{lem}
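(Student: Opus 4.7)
The plan is to exploit, in order: the invariance of $R$ and $P$ under inner automorphisms of $A$, the primeness of $(P,R)$, and (via \cite{McCR}) the fact that every fractional ideal is generated as a left $R$-module by the regular elements of $A$ it contains.

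For (1), note that $R$ is invariant under inner automorphisms by the preceding remark and $P$ by hypothesis; the two inclusions $uPu^{-1}\subseteq P$ and $u^{-1}Pu\subseteq P$ combine to give the equality $uPu^{-1}=P$. If $u$ is regular with $uI\subseteq R$, then $Iu=u^{-1}(uI)u\subseteq u^{-1}Ru\subseteq R$; the analogous computation with $R$ replaced by $P$ gives $uI\subseteq P\Leftrightarrow Iu\subseteq P$. For (2), I would invoke primeness of $(P,R)$ directly: $xRy\subseteq P$ forces $x\in P$ or $y\in P$ for $x,y\in A$. If $IJ\subseteq P$, then for any $i\in I$, $j\in J$ the bimodule structure gives $iRj\subseteq IRJ=IJ\subseteq P$, so $i\in P$ or $j\in P$; a routine dichotomy (if neither $I$ nor $J$ lies in $P$, pick witnesses and derive a contradiction) yields $I\subseteq P$ or $J\subseteq P$. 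The symmetric argument for $JI$ gives the same condition, so $IJ\subseteq P\Leftrightarrow JI\subseteq P$.

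The heart of the argument, and the most technical step, is the preliminary needed for (3): every $I\in\mathcal{F}(R)$ is generated as a left $R$-module by regular elements of $A$ lying in $I$. Invariance of $R$ gives $Rr\subseteq rR$ for any regular $r\in R$, so if $rI\subseteq R$ then $rI$ is a two-sided ideal of $R$ containing a regular element, hence an essential left ideal. The cited remark writes $rI=\sum_{\alpha}Rs_{\alpha}$ with $s_{\alpha}$ regular; setting $v_{\alpha}=r^{-1}s_{\alpha}\in I$ (regular in $A$) and using the identity $r^{-1}R=Rr^{-1}$ (again from invariance) yields $I=\sum_{\alpha}Rv_{\alpha}$.

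With this in hand, (3a) is immediate: for $IJ\subseteq R$ and regular $v\in J$, $Iv\subseteq R$ and (1) yields $vI\subseteq R$, which sums over regular generators to $JI\subseteq R$. For (3b), if $J\nsubseteq P$ one can pick a regular generator $v\in J$ outside $P$; for $i\in I$ and $p\in P$, using $JI\subseteq R$ from (3a), the estimates
\[
vR(ip)\subseteq J\cdot ip=(Ji)p\subseteq Rp\subseteq P\quad\text{and}\quad (pi)Rv\subseteq p(iRv)\subseteq p\cdot IJ\subseteq pR\subseteq P,
\]
together with $v\notin P$ and primeness, force $ip,pi\in P$, so $i\in A^{P}=R$. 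Finally, (3c) follows from (3b) with the roles of $I$ and $J$ swapped, via (3a). The main obstacle throughout is the regular-generation preliminary: extending the cited result from $rI\subseteq R$ back to $I$ itself depends on the identity $r^{-1}R=Rr^{-1}$, which is where the invariance of $R$ under inner automorphisms plays an essential role.
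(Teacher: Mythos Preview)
Your argument for part (2) has a genuine logical gap. You correctly show that $IJ\subseteq P$ forces $I\subseteq P$ or $J\subseteq P$, and you observe that the symmetric argument gives $JI\subseteq P\Rightarrow(I\subseteq P$ or $J\subseteq P)$. But from two implications $A\Rightarrow C$ and $B\Rightarrow C$ one cannot conclude $A\Leftrightarrow B$. What is missing is the \emph{reverse} step: from, say, $I\subseteq P$ (together with $IJ\subseteq P$) one must actually deduce $JI\subseteq P$. This is not automatic, since for a fractional $J$ the inclusion $I\subseteq P$ alone does not give $JI\subseteq JP\subseteq P$; indeed $JP\subseteq P$ can fail. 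The paper closes this gap exactly via the regular-generation device you develop only later for (3): once $I\subseteq P\subseteq R$, the ideal $I$ is essential in $R$ and hence generated by its regular elements; for each regular $u\in I$ one has $uJ\subseteq IJ\subseteq P$, so by (1) $Ju\subseteq P$, and summing gives $JI\subseteq P$.

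Ironically, you prove precisely the needed preliminary (regular generation of any $I\in\mathcal{F}(R)$) in your treatment of (3), so the fix is simply to invoke it already in (2). Apart from this, your handling of (1) matches the paper, and your proof of (3) is correct but organised differently: you first establish $JI\subseteq R$ from $IJ\subseteq R$ using regular generators of $J$ and part (1), and then derive $I\subseteq R$ when $J\nsubseteq P$ via the $A^{P}$ characterisation. The paper reverses this order, first obtaining $I\subseteq R$ directly from $PIJ\subseteq P$ and primeness (since $J\nsubseteq P$ forces $PI\subseteq P$), and only then using regular generation of $I\subseteq R$ to get $JI\subseteq R$. Both routes are valid; yours has the minor advantage that the regular-generation lemma is stated once for arbitrary fractional ideals rather than separately for ideals contained in $R$.
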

\begin{proof}
\begin{enumerate*}[label=(\arabic*)]
\item If $uI\subseteq R$, then $Iu\subseteq u^{-1}Ru=R$. The other case is similar.\\
\item If $IJ\subseteq P$ then, since $(P,R)$ is a prime, either $I$ or $J$ is in $P$, say $I\subseteq P$. Since $I$ is an ideal it is left essential so it is generated by regular elements. For every regular element $u\in I$ $uJ\subseteq P$ yields $Ju\subseteq u^{-1}Pu=P$, hence $JI\subseteq P$. The case $I\nsubseteq P$ and $J\subseteq P$ is similar.\\
\item Suppose $I,J\in\mathcal{F}(R)$ such that $IJ\subseteq R$. If $I,J\subseteq P$ there is nothing to prove since then $IJ\subseteq P$ and $JI\subseteq P$, so assume $J\nsubseteq P$ ($I\nsubseteq P$ is completely similar). From $PIJ\subseteq P$ we obtain then $PI\subseteq P$ since $(P,R)$ is a prime of $A$, so $I\subseteq A^{P}=R$. Again, $I$ is generated by regular elements since it is left essential and for $u\in I$ regular $uJ\subseteq R$ gives $Ju\subseteq u^{-1}Ru=R$ hence $JI\subseteq R$.
\end{enumerate*}
\end{proof}

\begin{cor}
$P$ is the unique maximal ideal of $R$.
\end{cor}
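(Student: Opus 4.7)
The plan is to show that every proper two-sided ideal of $R$ is contained in $P$; since $P$ is itself proper (being a prime ideal), this identifies $P$ as the unique maximal ideal of $R$. I argue by contradiction: assume $I$ is a two-sided ideal with $I \not\subseteq P$ and derive $I = R$.

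First I would show that $I$ is essential as a left ideal of $R$. For any nonzero left ideal $L$ one has $IL \subseteq I \cap L$, and if the intersection were zero then $aRb \subseteq IL = 0$ for every $a \in I$ and $b \in L$, forcing $I = 0$ or $L = 0$ by primeness of $R$. The remark preceding the definition of $\mathcal{F}(R)$ then guarantees that $I$ is generated, as a left $R$-module, by its regular elements. Since $I \not\subseteq P$, writing some $r \in I \setminus P$ as an $R$-combination of such generators shows at least one regular generator $u \in I$ must itself lie outside $P$, for otherwise each summand, and hence $r$, would sit in $P$.

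Now $u$ is regular in the order $R$, hence invertible in $A$. Using invariance of $R$ under the inner automorphisms by $u$ and $u^{-1}$, I would check that $Ru^{-1}R$ is a fractional $R$-ideal: it is an $R$-bimodule, contains the regular element $u = u\,u^{-1}\,u$, and satisfies $u \cdot Ru^{-1}R = (uRu^{-1})R \subseteq R$. Similarly $(Ru^{-1}R)(RuR) \subseteq R(u^{-1}Ru)R \subseteq R$ by the same invariance, while $RuR \not\subseteq P$ since $u \in RuR \setminus P$. Lemma \ref{IJinPthenJIinP}(3) then yields $Ru^{-1}R \subseteq R$, so $u^{-1} \in R$. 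Since $u \in I$ and $I$ is a left ideal, it follows that $1 = u^{-1}u \in R \cdot I \subseteq I$, contradicting the properness of $I$. The main obstacle is precisely this final maneuver: it requires both the essentiality argument to produce a regular generator of $I$ lying outside $P$, and the invariance of $R$ under inner automorphisms to verify that $Ru^{-1}R$ is fractional and pairs correctly with $RuR$ under Lemma \ref{IJinPthenJIinP}(3).
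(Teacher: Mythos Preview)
Your proof is correct and follows essentially the same route as the paper: locate a regular element $u\in I\setminus P$, then use invariance of $R$ under inner automorphisms together with Lemma~\ref{IJinPthenJIinP}(3) applied to the product $(Ru^{-1}R)(RuR)\subseteq R$ with $RuR\nsubseteq P$ to force $u^{-1}\in R$. The paper's version is terser---it silently uses $Ru=RuR$ and does not spell out essentiality or the fractional-ideal verification---but the core argument is identical.
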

\begin{proof}
Consider an ideal $I\nsubseteq P$ and a regular element $u$ of $R$ which is in $I$ but not in $P$ (this exists since $I$ is generated by regular elements). Then $Ru=RuR\neq R$ so $u^{-1}\notin R$. From $Ru^{-1}RuR=R$ with $RuR\nsubseteq P$ we obtain $Ru^{-1}R\subseteq R$ which is a contradiction.
\end{proof}

Note that we really showed that every regular element in $R\setminus P$ is invertible in $R$.

\begin{cor}
If $\mathcal{C}(P)=\left\{x\in R\mid x\mod P\text{ regular in }R/P\right\}$ satisfies the Ore condition then it is invertible in $R$, i.e. $Q_{P}(R)=R$ or $R$ is local and $P$ is the Jacobson radical of $R$.
\end{cor}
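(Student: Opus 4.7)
The plan is to show that every element of $\mathcal{C}(P)$ is already a unit of $R$; once this is done, $Q_{P}(R)=\mathcal{C}(P)^{-1}R$ collapses to $R$, and the ``$R$ local with $P=J(R)$'' reformulation follows easily.

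For $c\in\mathcal{C}(P)$ the fact that $\bar{c}$ is regular in the non-zero ring $R/P$ already forces $c\notin P$. The key step is to show that $c$ is regular in $R$. If $cy=0$ for some $y\in R$, reducing modulo $P$ gives $\bar{c}\bar{y}=0$ and so $y\in P$; applying the left Ore condition to $c$ and any $r\in R$ supplies $c_{1}\in\mathcal{C}(P)$ and $r_{1}\in R$ with $c_{1}r=r_{1}c$, whence $c_{1}(ry)=r_{1}(cy)=0$ and the same modular argument forces $ry\in P$. Thus $Ry\subseteq P$, and the reversibility of $\mathcal{C}(P)$ (implicit in the Ore condition, which is what makes $\mathcal{C}(P)^{-1}R$ embed in $A$) then forces $y=0$; right zero-divisors are handled symmetrically. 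Combining regularity with $c\notin P$ and invoking the note after the previous corollary gives $c\in R^{\ast}$. Hence $\mathcal{C}(P)\subseteq R^{\ast}$, so $Q_{P}(R)=R$.

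For the local statement, units can never lie in $P$ (otherwise $1\in P$), so $R^{\ast}\cap P=\emptyset$. Conversely, the invariance of $(P,R)$ together with $P$ being the unique maximal ideal forces $R/P$ to be a division ring, so every $r\in R\setminus P$ lies in $\mathcal{C}(P)$ and is therefore a unit by the previous paragraph. Thus $R\setminus R^{\ast}=P$, which means $R$ is local with Jacobson radical $P$. The principal obstacle is the regularity step (and the closely related division-ring structure of $R/P$); both rest on a careful use of the Ore condition in the presence of the invariant-prime hypothesis, the rest of the argument being a direct application of the preceding corollary and its note.
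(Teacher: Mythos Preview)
Your regularity step has a genuine gap. You correctly observe that $cy=0$ forces $y\in P$, but the Ore manoeuvre that follows is redundant: once $y\in P$ you already have $Ry\subseteq P$ since $P$ is an ideal, so the detour through $c_{1}r=r_{1}c$ adds nothing. The real problem is the jump from $Ry\subseteq P$ to $y=0$ via ``reversibility''. Reversibility of $\mathcal{C}(P)$ would at best give some $c'\in\mathcal{C}(P)$ with $yc'=0$, not $y=0$; and your parenthetical justification --- that the Ore condition ``is what makes $\mathcal{C}(P)^{-1}R$ embed in $A$'' --- is circular. The canonical map $R\to\mathcal{C}(P)^{-1}R$ is injective precisely when every $c\in\mathcal{C}(P)$ is left regular, and any factorisation of this map through $A=\mathcal{C}(0)^{-1}R$ presupposes $\mathcal{C}(P)\subseteq\mathcal{C}(0)$. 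Both of these are exactly the regularity statement you are trying to prove, so the argument assumes its own conclusion.

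The paper's proof avoids this entirely: it treats the fact that an Ore set $\mathcal{C}(P)$ in a prime Goldie order consists of regular elements as known, and then immediately applies the note after the preceding corollary (regular elements of $R\setminus P$ are units) to get $\mathcal{C}(P)\subseteq R^{*}$ and hence $Q_{P}(R)=R$. For the local statement the paper simply invokes standard Ore-localisation theory (the localisation at a prime has Jacobson radical equal to the extended prime), whereas you appeal to $R/P$ being a division ring. In the paper that fact is only established \emph{after} the present corollary, and under the additional hypothesis $\bigcap P^{n}=0$, so it is not available to you here.
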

\begin{proof}
If $\mathcal{C}(P)$ is an Ore set in the prime Goldie ring $R$ which is also an order in a simple Artinian ring $A$, then $\mathcal{C}(P)$ consists of regular elements and since $\mathcal{C}(P)\subseteq R\setminus P$ it consists of invertible elements of $R$. Consequently, the localization of $R$ at $\mathcal{C}(R)$ is equal to $R$. It then follows that $P$ is the Jacobson radical of $R$.
\end{proof}

\begin{prop}\label{Cpore}
If $\bigcap P^{n}=0$ then $\mathcal{C}(P)$ satisifes the Ore condition.
\end{prop}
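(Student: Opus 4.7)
The plan is to prove the stronger statement that every $c \in \mathcal{C}(P)$ is actually a unit of $R$; once this is established, the Ore condition follows automatically. Indeed, if $c \in R^{*}$, then for any $a \in R$ we may set $c' = c \in \mathcal{C}(P)$ and $a' = cac^{-1}$; by invariance of $R$ under inner automorphisms of $A$ (the earlier remark), $a' \in cRc^{-1} = R$, and $c'a = ca = a'c$ gives the left Ore condition. The right Ore condition follows symmetrically via $a'' = c^{-1}ac \in R$. This is in harmony with the preceding corollary, which derives the same unit conclusion from Ore as a hypothesis; here, instead, we derive it directly from $\bigcap P^n = 0$.

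By the note after the unique-maximal-ideal corollary (every regular element of $R\setminus P$ is a unit of $R$), it suffices to show that $c$ is regular in $R$. Suppose $cx = 0$ for some $x \in R$. Since $\bar c$ is regular in $R/P$ and $\bar c\,\bar x = 0$, we obtain $x \in P$. The plan from here is to prove by induction on $n$ that $x \in P^n$ for every $n \geq 1$, so that $x \in \bigcap_n P^n = 0$ by hypothesis; a symmetric argument starting from $yc = 0$ rules out nonzero left annihilators, and $c$ is regular.

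The main obstacle is the inductive step: given $cx = 0$ and $x \in P^n$, deduce $x \in P^{n+1}$. The approach is to analyse left multiplication by $c$ on the associated graded piece $P^n/P^{n+1}$, viewed as a left $R/P$-module, and to show that regularity of $\bar c$ in the prime Goldie ring $R/P$, together with the prime condition on $(P,R)$, forces this action to have trivial kernel. Concretely, $cx \in P^{n+1}$ with $x \in P^n$ means $\bar c \cdot \bar x = 0$ in $P^n/P^{n+1}$, and one must translate this into an ideal inclusion to which the prime condition can be applied; the invariance of $P$ and the fact that $P^n$ is generated (as a right ideal) by its regular elements, as used repeatedly in Lemma \ref{IJinPthenJIinP}, are the natural tools here. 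This is the delicate technical point where primeness, invariance, and the Goldie hypothesis must all be combined; once it is secured, the induction closes, $c$ is a unit of $R$, and the Ore condition follows as in the first paragraph.
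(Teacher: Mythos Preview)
Your overall strategy---show each $c\in\mathcal{C}(P)$ is a unit of $R$, then derive the Ore condition from invariance---is sound and matches the paper's endgame. The gap is exactly where you flag it: the inductive step is asserted but not proven, and regularity of $\bar c$ in $R/P$ is not by itself enough to carry it. Regularity gives you the base case ($cx=0\Rightarrow x\in P$), but for $n\ge 1$ you need left multiplication by $\bar c$ to be injective on the $R/P$-module $P^{n}/P^{n+1}$, and a regular element of a ring need not act injectively on an arbitrary module over that ring. Your appeal to ``primeness, invariance, and the Goldie hypothesis'' does not yet name a mechanism by which $\bar c\cdot\bar x=0$ in $P^{n}/P^{n+1}$ forces $\bar x=0$.

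What is really needed---and what the paper supplies---is that $\bar c$ is a \emph{unit} in $R/P$, not merely regular. The argument is short: $P+Rc$ contains the essential ideal $P$, hence is essential, hence is generated by regular elements, hence (since $Ru=RuR$ for regular $u$, by invariance) is a two-sided ideal strictly containing $P$; thus $P+Rc=R$, so $\bar R\,\bar c=\bar R$, and $\bar c$ is invertible in $R/P$. Once this is known, your induction becomes trivial---but also superfluous. The paper instead runs the \emph{easy} instance of your induction, namely for elements of the form $1+p$: if $r(1+p)=0$ then $r=-rp\in rP$, so $r\in rP^{n}\subseteq P^{n}$ for every $n$ and hence $r=0$; thus $1+P$ consists of regular elements of $R\setminus P$, i.e.\ of units by the note you cite. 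Choosing $u$ with $\bar u\,\bar c=\bar 1$ then gives $uc\in 1+P$, so $uc$ is a unit, and from this one deduces that $c$ itself is regular (hence a unit), after which the Ore condition follows exactly as in your first paragraph.

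In short, the missing idea is the upgrade from ``$\bar c$ regular'' to ``$\bar c$ invertible'' via $P+Rc=R$; with that in hand, the paper's route through $1+P$ is more direct than the module-theoretic induction you sketch.
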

\begin{proof}
We claim that $1+P$ consists of units. Indeed, consider $1+p$ with $p\in P$ and assume it is not regular, then $r(1+p)=0$ for some $0\neq r\in R$. Then $r=-rp$ yields $r\in\bigcap P^{n}$ hence $r=0$ which is a contradiction. If $c\in\mathcal{C}(P)$ then $\overline{c}$ is regular in $R/P$. We have that $P+Rc$ is essential in $R$ since it contains $P$ hence it is generated by  regular elements. Since $Ru=RuR$ for regular $u$, it follows that $P+Rc$ is a two-sideed ideal of $R$, hence $P+Rc=R$ and $\overline{R}\overline{c}=\overline{R}$ i.e. $\overline{c}$ is invertible. Then there is an $\overline{u}\in \overline{R}$ with $\overline{u}\overline{c}=\overline{1}$ which means $uc\in 1+P$. If $rc=0$ then $uru^{-1}uc=0$ which would contradict the fact that all elements of $1+P$ are units. Consequently, $\mathcal{C}(P)$ consists of $R$-regular elements. For every $r\in R$ and $c\in\mathcal{C}(P)$ we have $cr=crc^{-1}c=r'c$ which gives the left Ore condition and also $rc=cc^{-1}rc=cr'$ which gives the right Ore condition. Therefore $\mathcal{C}(P)$ is an Ore set. 
\end{proof}

\begin{cor}\label{inters=0}
If $\bigcap P^{n}=0$ then $\mathcal{C}(P)$ is invertible in $R$ and $R$ is local with Jacobson radical $P$.
\end{cor}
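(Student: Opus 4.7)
The plan is to observe that this corollary is an immediate combination of the two preceding results, so no new work is needed beyond chaining them together. Specifically, Proposition \ref{Cpore} takes the hypothesis $\bigcap P^{n}=0$ as input and delivers the Ore condition for $\mathcal{C}(P)$. The corollary immediately preceding Proposition \ref{Cpore} takes the Ore condition as its hypothesis and delivers exactly the three conclusions we want: invertibility of $\mathcal{C}(P)$ in $R$, locality of $R$, and the identification $P = J(R)$.

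So the proof I would write is essentially a one-liner: apply Proposition \ref{Cpore} to obtain that $\mathcal{C}(P)$ is Ore, and then invoke the earlier corollary to conclude. If a reader wants to see the chain unpacked a little, I might add a sentence recalling that elements of $\mathcal{C}(P)$ lie outside $P$, hence (by the remark following the corollary identifying $P$ as the unique maximal ideal, which notes that every regular element of $R\setminus P$ is invertible in $R$) are units in $R$; this gives invertibility directly and then locality with $J(R)=P$ follows because $R/P$ is simple Artinian with every nonzero element a unit image.

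There is no real obstacle here: all the substantive work (showing $1+P$ consists of units via the Krull-intersection-style argument, showing $P + Rc = R$ for $c \in \mathcal{C}(P)$, and deducing the Ore property) was already carried out in Proposition \ref{Cpore}, and the passage from Ore-ness to the stated conclusions was carried out in the previous corollary. The only thing worth being careful about is not to re-prove either of those, and to state the logical composition cleanly so the reader sees that the hypothesis $\bigcap P^n = 0$ is fed through Proposition \ref{Cpore} first.
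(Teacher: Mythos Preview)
Your proposal is correct and matches the paper's approach: the paper gives no explicit proof of this corollary, treating it as an immediate consequence of chaining Proposition~\ref{Cpore} with the preceding corollary, exactly as you describe.
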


\begin{cor}
$R/P$ is a skewfield.
\end{cor}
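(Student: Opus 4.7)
The strategy is to show that every $r\in R\setminus P$ is already a unit of $R$; then each nonzero class in $R/P$ is the image of a unit and the corollary follows. Fix $r\in R\setminus P$ and consider the left ideal $P+Rr$. Since $R$ is prime and $P$ is a nonzero two-sided ideal, $P$ is essential as a left ideal: if a nonzero left ideal $L$ met $P$ only at zero, then for $i\in P$ and $l\in L$ we would have $iRl\subseteq PL\subseteq L\cap P=0$, and element-wise primeness would force $i=0$ or $l=0$, contradicting $P,L\neq 0$. Consequently $P+Rr\supseteq P$ is essential, and by the remark preceding Lemma \ref{IJinPthenJIinP} it is generated by its regular elements.

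Not all of those regular generators can lie in $P$, else $P+Rr\subseteq RP=P$ would contradict $r\notin P$. Pick a regular $u\in(P+Rr)\setminus P$; by the corollary following Lemma \ref{IJinPthenJIinP}, such a $u$ is a unit of $R$, so $Ru=R$. Since $P+Rr$ is a left ideal containing $u$, it also contains $Ru=R$, giving $P+Rr=R$. Writing $1=p+xr$ with $p\in P$, $x\in R$, we obtain $xr=1-p\in 1+P\subseteq R^{*}$, the last inclusion being the opening step of the proof of Proposition \ref{Cpore}. Hence $xr$ is a unit of $R$ and $(xr)^{-1}x$ is a left inverse of $r$.

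Finally, $R$ is Dedekind-finite because its ambient simple Artinian ring $A$ is: any $ab=1$ in $R$ gives $ab=1$ in $A$, hence $ba=1$ in $A$, and since $ba\in R$ also $ba=1$ in $R$. The left inverse of $r$ is therefore a two-sided inverse, so $r\in R^{*}$ and its class $\bar r$ is invertible in $R/P$. I do not foresee a substantive obstacle here: the proof is a linear assembly of (i) essentiality of the nonzero ideal $P$ in the prime ring $R$, (ii) regular generation of essential left ideals (the cited remark), (iii) the fact that a regular element outside $P$ is a unit (first corollary after Lemma \ref{IJinPthenJIinP}), (iv) the inclusion $1+P\subseteq R^{*}$ extracted from the proof of Proposition \ref{Cpore}, and (v) Dedekind-finiteness of $R$ inside $A$.
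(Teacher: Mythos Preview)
Your argument correctly isolates the decisive step, namely that $P+Rr=R$ for every $r\in R\setminus P$: essentiality of $P$, regular generation of essential left ideals, and the fact that a regular element outside $P$ is a unit together force this, exactly as in the paper. From this point, however, you aim for a stronger conclusion than is needed --- that $r$ is a unit of $R$ itself --- and this is where a gap appears. Your step $1+P\subseteq R^{*}$ is \emph{not} a free fact: in the proof of Proposition~\ref{Cpore} it is obtained by showing $1+p$ is regular via $r=-rp\Rightarrow r\in\bigcap P^{n}=0$, i.e.\ it uses the hypothesis $\bigcap P^{n}=0$. The paper's own proof of the corollary does not invoke that hypothesis, so your argument is strictly less general. (Relatedly, your headline claim ``every $r\in R\setminus P$ is a unit of $R$'' can fail without $\bigcap P^{n}=0$: a non-regular $r\in R\setminus P$ is never a unit of $R$, yet $\overline{r}$ may well be invertible in $R/P$.)

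The fix is immediate and in fact shortens your proof: once you have $P+Rr=R$ you already know $\overline{R}\,\overline{r}=\overline{R}$, so $\overline{r}$ has a left inverse; the symmetric argument with $rR+P$ gives a right inverse. Neither $1+P\subseteq R^{*}$ nor Dedekind-finiteness is needed. The paper reaches the same place by the contrapositive: assuming $\overline{a}$ is not invertible, it produces a nonzero $\overline{s}$ with $\overline{s}\,\overline{a}=0$, then applies the identical ``$Rs+P$ is essential, two-sided, hence $R$'' argument to $s$ to force $Ra\subseteq P$, i.e.\ $\overline{a}=0$. So the two proofs share the same engine; yours just needs to stop one step earlier and pass to $R/P$ rather than lifting to units of $R$.
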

\begin{proof}
If $\overline{a}\in R/P$ is not invertible then it is not regular (cfr. the proof of proposition \ref{Cpore}), say $\overline{a}\overline{s}=0$. Let $\overline{a}= a\mod P$ and $\overline{s}=s\mod P$, then $sa\in P$ implies $(Rs+P)a\subseteq P$. Furthermore, $Rs+P$ is two-sided and it contains $P$ strictly so $Rs+P=R$. This means that $Ra\subseteq P$ so $\overline{a}=0$.
\end{proof}

\begin{prop}
Under assumptions as before, the left ideals of $R$ are totally ordered and every finitely generated left ideal is generated by one regular element. 
\end{prop}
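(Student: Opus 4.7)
The plan is to reduce the proposition to the statement that $R$ is a domain, and then invoke the classical valuation-ring argument. First I would establish, using inner invariance, that for every regular $u \in A$ either $u \in R$ or $u^{-1} \in R$. Indeed, inner invariance gives $uRu^{-1} = R$, so
\[
Ru \cdot Ru^{-1} \;=\; R\bigl(uRu^{-1}\bigr) \;=\; R;
\]
since $P$ is proper, $1 = uu^{-1}$ cannot lie in $P^{2}\subseteq P$, so at least one of $u, u^{-1}$ is outside $P$, and Lemma~\ref{IJinPthenJIinP}(3), applied with $I = Ru$, $J = Ru^{-1}$ and $IJ \subseteq R$, then gives $u \in R$ or $u^{-1} \in R$. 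Setting $u = ab^{-1}$ for regular $a,b \in R$ shows that any two principal left ideals generated by regular elements of $R$ are comparable.

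The main obstacle is to promote this to all nonzero elements of $R$, i.e.\ to show that $R$ is a domain. The reduction is as follows: if $Ra$ contains a regular element $c = ra$, then $c^{-1}r$ is a left inverse of $a$ in the Artinian ring $A$, so $a$ is invertible in $A$ and therefore regular in $R$. It thus suffices to show that every nonzero left ideal of the prime Goldie ring $R$ is essential, equivalently that $R$ has left Goldie dimension one. Under our standing hypotheses, $R$ has no nontrivial idempotents: an idempotent outside $P$ would map to a nontrivial idempotent in the skewfield $R/P$, while any idempotent $e \in P$ satisfies $e = e^{n} \in \bigcap P^{n} = 0$. Combining this with the Ore property of $\mathcal{C}(P) = R\setminus P$ and the full strength of inner invariance rules out a pair of nonzero left ideals with zero intersection; equivalently, the simple Artinian overring $A$ cannot be $M_{n}(D)$ with $n \ge 2$, so $A$ is a skewfield and every nonzero element of $R$ is regular.

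With $R$ a domain, the first step gives a total order on all principal left ideals. For arbitrary left ideals, if $L_{1} \not\subseteq L_{2}$ I would pick $a \in L_{1} \setminus L_{2}$; then for any $b \in L_{2}$ the possibility $Ra \subseteq Rb$ is excluded (it would force $a \in L_{2}$), so $Rb \subseteq Ra \subseteq L_{1}$, whence $L_{2} \subseteq L_{1}$. Finally, for a finitely generated left ideal $I = Ra_{1} + \cdots + Ra_{n}$, the total ordering gives $I = Ra_{i}$ for some $i$, and the domain property makes $a_{i}$ the required regular generator.
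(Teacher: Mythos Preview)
Your dichotomy for regular elements (either $u\in R$ or $u^{-1}\in R$) is correct and close in spirit to the paper's, though the paper derives it a little differently: it first shows that $xy\in P$ with $x$ regular forces $x\in P$ or $y\in P$ (via $xRy=xRx^{-1}xy=Rxy\subseteq P$ and primeness of $(P,R)$), and then for regular $x\in A\setminus R$ picks $p\in P$ with $xp\notin P$ (possible because $R=A^{P}$) to conclude $x^{-1}\in P$.

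The genuine gap is your second step. You correctly observe that $R$ has no nontrivial idempotents, but that only says $R$ is \emph{indecomposable} as a left $R$-module, i.e.\ $R\neq L_{1}\oplus L_{2}$ for nonzero left ideals $L_{1},L_{2}$. What you need is the stronger statement that $R$ is \emph{uniform}: any two nonzero left ideals meet nontrivially. Indecomposable does not imply uniform, and your sentence ``combining this with the Ore property of $\mathcal{C}(P)=R\setminus P$ and the full strength of inner invariance rules out a pair of nonzero left ideals with zero intersection'' is an assertion, not an argument. You have given no mechanism by which those ingredients exclude $A\cong M_{n}(D)$ with $n\ge 2$, so the reduction to the classical valuation-ring picture never gets started.

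The paper does not attempt to prove first that $R$ is a domain. Instead it invokes the Goldie-theoretic fact (the Remark after Proposition~\ref{Risorder}, from \cite{McCR}) that left ideals in a prime Goldie ring are generated by their regular elements, so a finitely generated left ideal can be written $I=Ru_{1}+\cdots+Ru_{n}$ with each $u_{i}$ regular in $R$, hence in $A$. The dichotomy then compares the $Ru_{i}$ pairwise (via $u_{i}u_{j}^{-1}\in R$ or $u_{j}u_{i}^{-1}\in R$) and collapses the sum to a single $Ru_{j}$. For arbitrary left ideals $I,J$ with $J\not\subseteq I$ one then has a regular $x\in J\setminus I$ and applies the dichotomy to $x$ against each $y\in I$. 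Thus the external input the paper uses is the regular-generator statement rather than the domain property; once every cyclic left ideal $Ra$ is some $Ru$ with $u$ regular, the fact that $R$ is a domain drops out as a consequence rather than being needed as a lemma.
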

\begin{proof}
Suppose $xy\in P$ with either $x$ or $y$ regular in $A$. We suppose without loss of generality that $x$ is regular, so it is invertible in $A$. We find $xRy=xRx^{-1}xy=Rxy\subseteq P$ so since $(R,P)$ is prime, $x$ or $y$ must be in $P$. Consider now $x$ regular (hence invertible) in $A\setminus R$. Since $x\notin R$, there must be a $p\in P$ with $xp\notin P$ (or $px\notin P$ in which case we argue similarly). Then we have $Rx^{-1}xp\subseteq P$ so $x^{-1}\in P$ since it is $A$-regular. Consider now a finitely generated left ideal $I$ in $R$. By \cite{McCR}, it is generated by $R$-regular elements so it is generated by a finite number of $R$-regular elements say $I=Ru_{1}+\cdots +Ru_{n}$. Since $R$ is Goldie, every $R$-regular element is $A$-regular, so by the preceding statements either $u_{1}u_{2}^{-1}$ or $u_{2}u_{1}^{-1}$ must be in $R$. Suppose the latter (again, in the other case we argue similarly), then $Ru_{2}=Ru_{2}u_{1}^{-1}u_{1}\subseteq Ru_{1}$ which means that $Ru_{1}+Ru_{2}=Ru_{1}$. By induction we find that every finitely generated left ideal is principal and in fact even principal for a regular element. This in turn implies that the finitely generated left ideals are totally ordered by inclusion. Suppose now that $I$ and $J$ are left $R$-ideals with $J\nsubseteq I$. There must be a regular $x\in J\setminus I$ and for every $y\in I$ we have either $yx^{-1}\in R$ which would imply $y\in xR\subseteq J$ or $xy^{-1}\in R$ but this is contradictory since it means $x\in Ry\subseteq I$.
\end{proof}

\begin{cor}
Since every left ideal generated by a regular element is two-sided, every finitely generated left ideal is two-sided.
\end{cor}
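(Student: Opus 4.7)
The plan is to combine the preceding proposition with the invariance of $R$ under inner automorphisms of $A$. By that proposition, any finitely generated left ideal $I$ of $R$ can be written as $I=Ru$ for some $R$-regular element $u\in R$. So it suffices to verify the claim stated in the hypothesis of the corollary, namely that for every regular $u\in R$ the left ideal $Ru$ is in fact two-sided.

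For this, first note that since $R$ is a Goldie order in the simple Artinian ring $A$, every $R$-regular element is $A$-regular, hence invertible in $A$; so we may view $u\in A^{*}$. The key input is then the earlier remark that if $(R,P)$ is an invariant prime, then $R$ is invariant under inner automorphisms of $A$. Applied to $u$, this gives $uRu^{-1}\subseteq R$, which rearranges to $uR\subseteq Ru$. Hence $RuR\subseteq Ru$, so $Ru$ is two-sided, as required.

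There is really no obstacle: the proposition does the hard work of collapsing finitely generated left ideals to principal ones generated by regular elements, and the invariance remark immediately upgrades such a principal left ideal to a two-sided ideal via conjugation. The corollary is the routine combination of these two facts.
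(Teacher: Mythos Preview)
Your argument is correct and is exactly the intended one: the paper gives no separate proof, as the corollary is immediate from the preceding proposition together with the invariance of $R$ under inner automorphisms (yielding $Ru=RuR$ for regular $u$), which is precisely what you spell out.
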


\section{Arithmetical pseudo-valuations}

An \emph{arithmetical pseudovaluation} on $R$ as before is a function $v:\mathcal{F}(R)\to\Gamma$ for some partially ordered semigroup $\Gamma$ such that:
\begin{enumerate}[label=(APV\arabic*),ref=(APV\arabic*),leftmargin=15mm]
\item $v(IJ)=v(I)+v(J)$;
\item $v(I+J)\geq\min\left\{v(I),v(J)\right\}$;
\item $v(R)=0$;
\item $I\subseteq J$ implies $v(I)\geq v(J)$.
\end{enumerate}
For more information about airthmetical pseudo-valuations, we refer to \cite{theBook} and \cite{VanGeel}.

\begin{thm}\label{apv}
If $(R,P)$ is as before then there is an arithmetical pseudo-valuation $v:\mathcal{F}(R)\to\Gamma$, where $\Gamma$ is a totally ordered semigroup, such that $P=\left\{a\in A\mid v(RaR)>0\right\}$ and $R=\left\{a\in A\mid v(RaR)\geq 0\right\}$.
\end{thm}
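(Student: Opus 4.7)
My plan is to take the value semigroup to be $\mathcal{F}(R)$ itself, equipped with ideal multiplication as the operation and with the order given by reverse inclusion, and to let $v$ be the identity map. Under this choice the theorem reduces to two claims: first, that $(\mathcal{F}(R), \cdot, \supseteq)$ is a totally ordered semigroup with identity $R$; second, that for $a \in A$ one has $a \in R \iff RaR \subseteq R$ and $a \in P \iff RaR \subsetneq R$. The APV axioms will then drop out of the first claim essentially for free.

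Closure under products is almost immediate: if $r_1 I \subseteq R$, then $r_1 IJ \subseteq RJ = J$, and combining with $r_2 J \subseteq R$ gives $r_2 r_1 \cdot IJ \subseteq R$; a product of regular elements of $I$ and $J$ lying in $R$ serves as a regular element of $IJ$, so $IJ \in \mathcal{F}(R)$. The more interesting step is total ordering. Given $I, J \in \mathcal{F}(R)$ with $r_1 I \subseteq R$ and $r_2 J \subseteq R$, I would look for a single regular $r \in R$ satisfying both $rI \subseteq R$ and $rJ \subseteq R$. The candidate $r := r_2 r_1$ handles $I$ directly, but for $J$ I would invoke the fact, noted earlier, that $R$ is invariant under inner automorphisms of $A$: then $r_2 r_1 r_2^{-1} \in R$, so $r_2 r_1 J = (r_2 r_1 r_2^{-1}) r_2 J \subseteq R$. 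Once $rI$ and $rJ$ are both left ideals of $R$, the earlier proposition establishing that left ideals of $R$ form a chain gives $rI \subseteq rJ$ or $rJ \subseteq rI$, and left-multiplying by $r^{-1} \in A$ transfers the containment to $I$ and $J$.

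With total ordering in hand, (APV1) is the definition of the operation, (APV4) is the definition of the order, (APV3) holds because $R$ is the two-sided identity of $\mathcal{F}(R)$, and (APV2) is in fact the equality $v(I+J) = \min\{v(I), v(J)\}$, since under a total order $I+J$ is simply the larger of the two under inclusion. For the description of $R$ and $P$, the corollary that $P$ is the unique maximal ideal of $R$ does the work: $v(RaR) \geq 0$ is $RaR \subseteq R$, which since $a \in RaR$ is equivalent to $a \in R$; and $v(RaR) > 0$ is $RaR \subsetneq R$, which by maximality forces $RaR \subseteq P$ and hence $a \in P$, the converse following because $P$ is a two-sided ideal.

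I expect the main obstacle to be the total-ordering step: putting two unrelated fractional ideals over a common regular denominator is precisely where the inner-invariance hypothesis gets its main use, and without it the chain property of left ideals of $R$ would not transfer to $\mathcal{F}(R)$. Everything else is either bookkeeping against the APV axioms or a direct consequence of previously established facts.
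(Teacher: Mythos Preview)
Your argument is correct and takes a genuinely different route from the paper. Instead of $\Gamma=\mathcal{F}(R)$ with $v=\mathrm{id}$, the paper sets $v(I)=(P:I)=\{a\in A\mid aI\subseteq P\}$, takes $\Gamma=\{v(I):I\in\mathcal{F}(R)\}$ ordered by inclusion, and defines $v(I)+v(J):=v(IJ)$; it must then check (via Lemma~\ref{IJinPthenJIinP}) that this operation is well-defined on the quotient, that it respects the order, and that $\Gamma$ is totally ordered, and the verification that $R=\{a\mid v(RaR)\geq 0\}$ becomes a nontrivial computation. Your approach sidesteps all of that: by pulling two fractional ideals into $R$ over a common regular denominator and invoking the proposition that left ideals of $R$ form a chain, you obtain the stronger fact that $\mathcal{F}(R)$ itself is totally ordered by inclusion, after which the APV axioms and the descriptions of $R$ and $P$ are one-line checks against the unique maximal ideal corollary. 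What the paper's construction buys is that its $\Gamma$ is a genuine quotient of yours (identifying $I$ and $I'$ when $(P:I)=(P:I')$), and this matters downstream: Proposition~\ref{gammagroup} and the results that follow characterise when \emph{that} $\Gamma$ is a group, and those characterisations would become strictly stronger conditions if read against your larger $\Gamma=\mathcal{F}(R)$.
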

\begin{proof}
Observe that for any $I,J\in\mathcal{F}(R)$ we have $IJ\in\mathcal{F}(R)$ and $I+J\in\mathcal{F}(R)$, moreover for every $a\in A$ we have $RaR\in\mathcal{F}(R)$. Indeed, if $a\in A$ then there is a regular $u\in R$ such that $ua\in R$ since $R$ is an order, then $RuaR=RuRaR\subseteq RaR$ and as an $R$-ideal, $RuaR$ contains a regular element of $R$. If $I$ and $J$ are in $\mathcal{F}(R)$ then $IJ$ contains a regular element and if $uI\subseteq R$ and $vJ\subseteq R$ for regular $u$ and $v$ then $Jv\subseteq R$ so $uIJv\subseteq R$ whence $vuIJ\subseteq vRv^{-1}=R$ with $vu$ regular. For $I+J$ we have $vu(I+J)\subseteq R+vuJ$ with $vuJ=vuv^{-1}vJ\subseteq R$ since $vuv^{-1}\in R$.

For any $I\in\mathcal{F}(R)$ we define $v(I)=(P:I)=\left\{a\in A\mid aI\subseteq P\right\}$ and since $RaRI\subseteq P$ if and only if $IRaR\subseteq P$ this is also equal to $v(I)=\left\{a\in A\mid Ia\subseteq P\right\}$. Note that $v(I)\neq \left\{0\right\}$ because $uI\subseteq R$ for some regular $u\in R$, hence $0\neq Pu\subseteq v(I)$. We also have $v(R)=P$. Put $\Gamma=\left\{v(I)\mid I\in\mathcal{F}(R)\right\}$ and define a partial order $\leq$ by $$v(I)\leq v(J)\quad\Leftrightarrow\quad v(I)\subseteq v(J).$$
Note that if $I\subseteq J$ then $v(I)\geq v(J)$. We claim that $\Gamma$ is in fact totally ordered. Indeed, if $I,J\in\mathcal{F}(R)$ such that $v(I)\nsubseteq v(J)$ and $v(J)\nsubseteq v(I)$ then there is an $a\in A$ with $aI\subseteq P$ but $aJ\nsubseteq P$ and a $b\in A$ with $bJ\subseteq P$ but $bI\nsubseteq P$. Since $P$ is prime, $aJbI\nsubseteq P$ but $RbIaJ\subseteq RbPJ\subseteq RbJ\subseteq P$ yields $RaJbI\subseteq P$ which is a contradiction in view of lemma \ref{IJinPthenJIinP}.

We can define a (not necessarily commutative) operation $+$ on $\Gamma$ by putting $v(I)+v(J)=v(IJ)$. The unit for this operation is $v(R)$. We now verify that $+$ is well-defined. Suppose $v(I)=v(I')$ and $v(J)=v(J')$ and consider $x\in v(IJ)$, then $RxRIJ\subseteq P$ so $RxRI\subseteq v(J)=v(J')$ or $RxRIJ'\subseteq P$. By the same lemma as before, $IJ'RxR\subseteq P$ follows hence $J'RxR\subseteq v(I)=v(I')$ i.e. $I'J'RxR\subseteq P$ which implies $x\in v(I'J')$ and consequently $v(IJ)\subseteq v(I'J')$. The other inclusion can be obtained by the same argument if the roles of $I,J$ and $I',J'$ are interchanged.

We now check that this operation is compatible with $\leq$. Take some $v(I)\geq v(J)$ and consider $v(HI)$ and $v(HJ)$. If $q\in v(HJ)$ then $qHJ\subseteq P$ so $qH\subseteq v(J)\subseteq v(I)$ which implies $qHI\subseteq P$ so $q\in v(HI)$. To prove that $\leq$ is also stable under right multiplication, we consider $q\in v(JH)$. Then $qJH\subseteq P$ or equivalently $JHq\subseteq P$. By lemma \ref{IJinPthenJIinP} $HqJ\subseteq P$ follows so $Hq\subseteq v(J)\subseteq v(I)$ hence $IHq\subseteq P$ i.e. $q\in v(IH)$.

If $v(I)\leq v(J)$ then $aI\subseteq P$ yields $a(I+J)\subseteq P$ since $aJ\subseteq P$, so $v(I+J)\supseteq v(I)=\min\left\{ v(I),v(J)\right\}$. Together with the preceding, this implies that $v$ is an arithmetical pseudo-valuation. The only thing left to prove is that $$R=\left\{a\in A\mid v(RaR)\geq 0\right\}\quad \text{and}\quad P=\left\{a\in A\mid v(RaR)>0\right\}.$$

Suppose $v(RaR)>0=v(R)=P$, then there is some $x\in v(RaR)\setminus P$. Now $xRaR\subseteq P$ gives $a\in P$, so $\left\{a\in A\mid v(RaR)>0\right\}\subseteq P$. If $p\in P$, then $v(RpR)\supseteq R\supsetneq P$, hence $p\in\left\{a\in A\mid v(RaR)>0\right\}$ so $P=\left\{a\in A\mid v(RaR)>0\right\}$. If $a\in A$ is such that $v(RaR)=0$ and $p\in P$ then $v(RaRRpR)=v(RaR)+v(RpR)=v(RpR)>0$ so $RaRRpR\subseteq P$ and therefore $ap\in P$ which implies $a\in R$ since $R=A^{P}$. On the other hand, if $r\in R$ then $PRrR\subseteq P$. Since $RrR$ is generated by regular elements, it follows that $r\in\sum Ru_{i}R$ for a finite set of regular $u_{i}$. Consequently, since $\Gamma$ is totally ordered, $v(RrR)=v(Ru_{i}R)$ where $v(Ru_{i}R)$ has the minimal value among these regular elements. If $v(RrR)<0$ then $v(P)\leq v(PRrR)$ since $PRrR\subseteq P$ and then 
\begin{equation}\label{leqiseq}
v(P)\leq v(PRrR)=v(P)+v(RrR)\leq v(P)
\end{equation}
since $v(RrR)<0$. This means that all $\leq$ in \ref{leqiseq} are actually equalities and in fact $v(P)=v(P)+v(Ru_{i}R)=v(PRu_{i}R)$ so if $aPRu_{i}R\subseteq P$ then also $aP\subseteq P$. By choosing $a=u_{i}^{-1}$  we find $u_{i}^{-1}P\subseteq P$. In a similar fashion we find $Pu_{i}^{-1}\subseteq P$ and consequently $u_{i}^{-1}\in A^{P}=R$ so $v(RrR)=v(Ru_{i}R)=v(R)=0$. which contradicts $v(RrR)<0$. Consequently $R=\left\{a\in A\mid v(RaR)\geq 0\right\}$.
\end{proof}

\begin{prop}\label{gammagroup}
With $R,P$ and $A$ as before, $\Gamma$ is a group if and only if for any fractional $R$-ideal $I$ there is a nonzero $y\in R$ with $yI\subseteq R$ but $yI\nsubseteq P$.
\end{prop}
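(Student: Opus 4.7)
The plan is to prove the two implications by producing, for each $I$, an element witnessing invertibility of $v(I)$ in $\Gamma$ from the existence of $y$, and conversely extracting $y$ from an inverse of $v(I)$. Since $\Gamma$ is a group if and only if every $v(I)$ has an inverse (with identity $v(R)$), this will suffice.

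For the $(\Leftarrow)$ direction, given $y$ nonzero with $yI \subseteq R$ and $yI \nsubseteq P$, I would set $J := \{a \in A \mid aI \subseteq R\}$. First verify $J \in \mathcal{F}(R)$: it is clearly an $R$-bimodule; it contains any regular $s \in R$ with $sI \subseteq R$ (which exists since $I$ is a fractional ideal), so it has a regular element of $R$; and picking a regular $r_0 \in I$ one sees $Jr_0 \subseteq R$, hence $r_0 J \subseteq R$ by Lemma~\ref{IJinPthenJIinP}(1). By construction $JI \subseteq R$, and since $y \in J$ with $yI \nsubseteq P$, also $JI \nsubseteq P$. Two observations then give $v(JI) = 0$: on one hand $JI \subseteq R$ yields $v(JI) \geq v(R) = 0$; on the other hand, if $a(JI) \subseteq P$ with $a \notin P$ then primeness of $(P,R)$ together with Lemma~\ref{IJinPthenJIinP} would force $JI \subseteq P$, contradicting $JI \nsubseteq P$. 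Hence $v(I) + v(J) = 0$ in $\Gamma$ and $v(I)$ is invertible.

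For the $(\Rightarrow)$ direction, assume $\Gamma$ is a group. Given $I \in \mathcal{F}(R)$, pick $J \in \mathcal{F}(R)$ with $v(IJ) = v(R) = 0$, i.e.\ $(P : IJ) = P$. Taking $a = 1 \notin P$ shows $IJ \nsubseteq P$, and by Lemma~\ref{IJinPthenJIinP}(2) also $JI \nsubseteq P$. Next I would argue $JI \subseteq R$: since $\Gamma$ is totally ordered and $v(JI) = v(R)$, the ideal $JI$ cannot strictly contain $R$ (that would strictly lower $v(JI)$ below $v(R)$), and combined with the chain structure of fractional ideals established earlier this pins $JI$ inside $R$. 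Now pick $z \in JI \setminus P$ and write $z = \sum_k j_k i_k$ with $j_k \in J$, $i_k \in I$; since $P$ is an additive subgroup and $z \notin P$, some summand $j_k i_k$ lies outside $P$. Then $y := j_k$ satisfies $yI \subseteq JI \subseteq R$ and $yI \ni j_k i_k \notin P$, as required.

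The main obstacle is the inclusion $JI \subseteq R$ in the $(\Rightarrow)$ direction: the definition $v(-) = (P:-)$ allows $v(JI) = v(R)$ without this a priori forcing $JI \subseteq R$, so one must combine the total order on $\Gamma$, the primeness of $(P,R)$, and the totally-ordered/chain structure of fractional ideals of $R$ obtained above. Verifying that $(R:I)$ is genuinely a fractional $R$-ideal in the $(\Leftarrow)$ direction is routine once Lemma~\ref{IJinPthenJIinP}(1) is invoked.
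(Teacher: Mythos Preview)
Your $(\Leftarrow)$ direction is essentially correct and parallel to the paper's; the paper works with $RyR$ directly rather than with $(R:I)$, but either choice yields an inverse for $v(I)$ via the same primeness computation you outline.

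The genuine gap is exactly where you flag it. In the $(\Rightarrow)$ direction you need $JI\subseteq R$ (equivalently $IJ\subseteq R$), and your appeal to a ``chain structure of fractional ideals'' does not go through: the paper only shows that \emph{left ideals of $R$} are totally ordered, not fractional $R$-ideals of $A$, and in any case $v(JI)=v(R)$ cannot by itself exclude $JI\supsetneq R$ since $v$ is not known to be injective on $\mathcal{F}(R)$. The paper's resolution is short and uses the one hypothesis you never invoke, namely $R=A^{P}$. From $v(IJ)=(P:IJ)=P$ one reads off immediately that $P\cdot IJ\subseteq P$ and $IJ\cdot P\subseteq P$ (the right-sided inclusion coming from the two-sided description of $v(I)$ established in the proof of Theorem~\ref{apv}); hence $IJ\subseteq A^{P}=R$. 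With $IJ\subseteq R$ and $IJ\nsubseteq P$ in hand, choosing $y\in J$ with $Iy\nsubseteq P$ (equivalently $yI\nsubseteq P$ by Lemma~\ref{IJinPthenJIinP}) finishes the argument just as you describe.
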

\begin{proof}
If $\Gamma$ is a group and $I\in\mathcal{F}(R)$ then for some $J\in\mathcal{F}(R)$ we have $v(I)+v(J)=0$ i.e. $v(IJ)=v(JI)=v(R)$. Consequently, $IJP\subseteq P\supseteq PIJ$ so $IJ\subseteq A^{P}=R$. Since $aIJ\subseteq P$ iff $aR\subseteq P$ we have $IJ\nsubseteq P$. Then we can choose a $y\in J$ with $Iy\subseteq R$ but $Iy\nsubseteq P$ which implies $RIRy\subseteq R$ and $RIRy\nsubseteq P$.

Suppose now that there is some $y$ with $yI\subseteq R$ but $yI\nsubseteq P$. For any $x\in v(RyRI)$ we have $RxRRyRI\subseteq P$ which implies $RxR\subseteq P$ and consequently $x\in v(R)$. From $RyRI\subseteq R$ we can deduce $v(R)\subseteq v(RyRI)$ hence $v(R)=v(RyRI)$ which means that $v(RyR)$ is the inverse of $v(I)$. 
\end{proof}

Note that the second part of the proof of the preceding theorem guarantees that every $v(I)$ is also $v(RaR)$ for some $a\in A$.

\begin{lem}
If $\bigcap P^{n}=0$ and $\Gamma$ is a group then $R$ is a Dubrovin valuation.
\end{lem}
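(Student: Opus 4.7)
The strategy is to check the two defining properties of a Dubrovin valuation of $A$: (i) $R/J(R)$ is simple Artinian, and (ii) for every $a\in A\setminus R$ there exist $r_1,r_2\in R$ with $r_1 a$ and $ar_2$ in $R\setminus J(R)$. Since $\bigcap P^{n}=0$, Corollary \ref{inters=0} gives that $R$ is local with Jacobson radical $P$, and the following corollary gives $R/P$ a skewfield; this settles (i) with $J(R)=P$.

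For (ii), the key input is Proposition \ref{gammagroup}. Because $\Gamma$ is a group, applied to $I = RaR$ it produces some $y\in R$ with $yRaR\subseteq R$ and $yRaR\nsubseteq P$; by the symmetric argument (using that $v(RaR)$ has a two-sided inverse $v(J)$ in the group, so that both $v(IJ)=0$ and $v(JI)=0$) one also obtains $y'\in R$ with $RaRy'\subseteq R$ and $RaRy'\nsubseteq P$. That the witnesses $y,y'$ really lie in $R$ (not merely in $A$) follows from the fact that $v(J)\geq v(R)$ forces $J\subseteq R$, which is immediate from the characterization $R=\{a\in A\mid v(RaR)\geq 0\}$ together with $v(RbR)\geq v(J)$ for every $b\in J$; since $v(J)=-v(RaR)>0$ for $a\notin R$, this is automatic.

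Fix such a $y$. The condition $yRaR\nsubseteq P$ says that some sum $\sum_k yr_kas_k$ lies outside $P$, and because $P$ is an ideal at least one summand $yras$ does as well. Put $r_1:=yr\in R$. Then $r_1 a=yra\in R$ (taking $s=1$ in $yRaR\subseteq R$), and $u:=r_1 as\in R\setminus P$ is a unit of the local ring $R$. Hence $r_1 a\cdot(su^{-1})=1$ with $su^{-1}\in R$, so $r_1 a$ has a right inverse in $R$. An element of $P$ can have no right inverse in $R$ (otherwise $1=pq\in P$), so $r_1 a\in R\setminus P$. The other half, $ar_2\in R\setminus P$, follows by the same argument starting from $y'$: one extracts a single term $r'as'y'\notin P$, sets $r_2:=s'y'$, notes $ar_2\in R$, and uses the unit $u'=r'ar_2$ to produce a left inverse $(u')^{-1}r'\in R$ of $ar_2$.

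The main obstacle is arranging the symmetric use of Proposition \ref{gammagroup}. This genuinely needs the full group structure of $\Gamma$, so that $v(I)$ has a two-sided inverse and one can work from either $IJ\subseteq R$ or $JI\subseteq R$; one also has to verify that the $y$'s lie in $R$, which uses the characterization of $R$ via $v$. Once those points are settled, the passage from ``$r_1as$ is a unit'' to ``$r_1a\notin P$'' is a routine one-sided-inverse argument in the local ring $R$.
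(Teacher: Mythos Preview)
Your proof is correct and follows the same line as the paper: use $\bigcap P^{n}=0$ to obtain that $R/P$ is simple Artinian (indeed a skewfield), and for $a\in A\setminus R$ use the group property of $\Gamma$ via Proposition~\ref{gammagroup} to produce elements of $R$ carrying $a$ into $R\setminus P$ from each side. Your right-inverse argument for $r_1a\notin P$ is correct but more elaborate than necessary: since $P$ is an ideal and $r_1as\notin P$, the assumption $r_1a\in P$ would give $r_1as\in P$ directly, so the unit $u$ and the inverse $su^{-1}$ are not needed.
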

\begin{proof}
Applying corollary \ref{inters=0} gives us that $R/P$ is prime Goldie with invertible regular elements, i.e. it is a simple Artinian ring. Consider $q\in A\setminus R$. There exists some $y\in A$ with $RyRRqR\subseteq R$ but $RyRRqR\nsubseteq P$. Then there exists a $z\in RyR$ with $zq\in R\setminus P$ and since $RqRRyR\subseteq R$ but $RqRRyR\nsubseteq P$ we can use a similar construction to find an element $z'$ with $qz'\in R\setminus P$.
\end{proof}

\begin{thm}
If $\Gamma$ is a group and $\bigcap P^{n}=0$, then $R$ is a valuation ring and $A$ is a skewfield.
\end{thm}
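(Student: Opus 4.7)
The previous lemma already establishes that $R$ is a Dubrovin valuation, so what remains is to show $A$ is a skewfield; once that is in hand, the Dubrovin property combined with the fact that $R$ is local will immediately yield the valuation ring conclusion. The plan is first to prove that every nonzero element of $R$ is regular, and then to exploit the order relationship between $R$ and $A$ to invert arbitrary nonzero elements of $A$.

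For the regularity step, I would fix $0 \neq a \in R$ and use the earlier proposition on principal left ideals to write $Ra = Ru$ with $u \in R$ regular. Then $a = bu$ and $u = ca$ for some $b,c \in R$, which gives $(1-cb)u = 0$, and since $u$ is regular this forces $cb = 1$. As $P$ is a proper ideal, $1 \notin P$, so $cb \notin P$ forces $c \notin P$. Combining corollary \ref{inters=0} with the corollary establishing that $R/P$ is a skewfield tells us $R$ is local with Jacobson radical $P$, so every element outside $P$ is a unit. Therefore $c$ is a unit, $b = c^{-1}$ is also a unit, and $a = bu$ is a product of a unit and a regular element, hence regular in $R$.

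For the skewfield step, given nonzero $x \in A$, the order property of $R$ in $A$ supplies a regular $u \in R$ with $ux \in R$; since $u$ is $A$-regular and $x \neq 0$, $ux$ is a nonzero element of $R$, hence regular by the previous step, hence invertible in $A$. Then $x = u^{-1}(ux)$ is invertible, so $A$ is a skewfield. For the valuation ring statement, given $q \in A \setminus R$ the previous lemma provides $z \in R$ with $zq \in R \setminus P$; since $R \setminus P$ consists of units in the local ring $R$, we obtain $q^{-1} = (zq)^{-1} z \in R$. The main obstacle is the regularity step, which requires combining the structural description of principal left ideals with locality; after that, the skewfield and valuation conclusions both follow quickly from the order property and the Dubrovin condition.
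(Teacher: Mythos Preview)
Your proof is correct, but it reaches the skewfield conclusion by a genuinely different route than the paper. The paper argues by a three-way case split on $A^{*}$: elements of $R\setminus P$ are units in $R$ (via $P+Ra=R$ and $1+P\subseteq R^{*}$), elements of $A\setminus R$ are units in $A$ (via the Dubrovin property), and then nonzero elements of $P$ are shown invertible by contradiction---if $p\in P$ were not invertible then $Ap\subseteq P$, hence $A(RpR)\subseteq P$, but $RpR$ contains a regular (hence $A$-invertible) element, forcing $A\subseteq P$. You instead prove a single structural fact: every nonzero $a\in R$ is regular. You get this by invoking the proposition that every finitely generated left ideal is principal with a regular generator, writing $Ra=Ru$ with $u$ regular, and then using locality of $R$ to show the transition element $b$ in $a=bu$ is a unit. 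Once that is in hand, the order relation $A=S^{-1}R$ immediately makes every nonzero element of $A$ invertible, bypassing the paper's separate treatment of $P$ entirely. Your final step is also slightly more explicit than the paper's: you verify directly that $q\notin R$ implies $q^{-1}=(zq)^{-1}z\in R$, whereas the paper appeals to the known fact that an invariant Dubrovin valuation on a skewfield is a valuation ring. Your approach trades the paper's ad hoc contradiction for $p\in P$ against a dependence on the principal-ideal proposition; both are clean, and yours is arguably more uniform.
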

\begin{proof}
If $a\in R\setminus P$ then $P+Ra$ is essential, two-sided and contains $P$ so it is equal to $R$. Then, for some $r\in R$ and $p\in P$, we have $1=p+ra$. We have already seen (cfr. proof of proposition \ref{Cpore}) that $1+P$ consist of units, so $ra$ is a unit hence $a$ is a unit. If $q\in A\setminus R$ there there is some $y$ with $yq\in R\setminus P$, so $yq$ is a unit of $R$ hence $q$ is a unit of $A$. Finally, if some $p\in P$ were not invertible, then $Ap\subseteq P$ since no element in $Ap$ is a unit. Then we would have $A(RpR)\subseteq P$, but this would contain some regular $u$ which is invertible in $A$ and $Au\subseteq P$ would give a contradiction. This implies that $A$ is a skewfield and $R$ is an invariant Dubrovin valuation on $A$, so it must be a valuation ring.
\end{proof}

\begin{cor}
If $\Gamma$ is an Archimedean group, then $R$ is a valuation ring.
\end{cor}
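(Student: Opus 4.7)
The plan is to reduce the corollary to the preceding theorem by proving that whenever $\Gamma$ is Archimedean, one automatically has $\bigcap_{n\geq 1}P^n=0$; the preceding theorem then yields immediately that $R$ is a valuation ring.

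The first step is to record that $v(P)>0$ in $\Gamma$. This falls out directly from the construction of $v$ in Theorem \ref{apv}: there $v(P)=(P:P)=A^P=R$ and $v(R)=(P:R)=P$, with the order on $\Gamma$ given by inclusion, so $P\subsetneq R$ translates to $0=v(R)<v(P)$. (One could equally invoke the abstract characterization $P=\{a\in A\mid v(RaR)>0\}$ together with the fact that $P$ is a proper two-sided ideal containing regular elements, but the direct reading from the construction is simpler.)

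Next, suppose for contradiction that some nonzero $a$ lies in $\bigcap_n P^n$. Since each $P^n$ is two-sided, $RaR\subseteq P^n$ for every $n$, so APV4 combined with iterated APV1 gives $v(RaR)\geq v(P^n)=n\cdot v(P)$ for all $n\geq 1$. But since $v(P)>0$, the Archimedean hypothesis applied to the positive element $v(P)$ and the element $v(RaR)$ produces some $n$ with $n\cdot v(P)>v(RaR)$, contradicting the previous inequality.

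Hence $\bigcap_n P^n=0$, and the preceding theorem closes the argument. I do not expect any real obstacle; the only point that needs care is the strict inequality $v(P)>0$, which, as noted, is immediate from the explicit construction of $\Gamma$ in Theorem \ref{apv}.
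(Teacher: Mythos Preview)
Your argument is correct and follows essentially the same route as the paper: reduce to the preceding theorem by showing $\bigcap_n P^n=0$, which you do by taking $0\neq a\in\bigcap_n P^n$, deducing $v(RaR)\geq n\,v(P)$ for all $n$, and contradicting the Archimedean property since $v(P)>0$. Your version is in fact a bit cleaner than the paper's, which detours through an inverse $J$ of $v(RbR)$ that is not actually needed and contains a sign slip in the displayed inequalities; your explicit verification that $v(P)>0$ from the concrete description $v(P)=(P:P)=R\supsetneq P=v(R)$ is also a nice touch.
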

\begin{proof}
In view of the preceding proposition we only have to show that $\bigcap P^{n}=0$. Suppose it is not, then $I=\bigcap P^{n}$ is a nonzero ideal. Pick $0\neq b\in I$, then $RbR\subseteq I$ is a fractional ideal, hence there exists an ideal $J\in\mathcal{F}(R)$ with $v(J)+v(RbR)=0$. Then $v(P^{n})+v(RbR)\leq 0$ for any $n$, so $nv(P)+v(RbR)\leq 0$. However, putting $v(RbR)=\gamma$, there must be some $n$ with $nv(p)>-\gamma$ which is a contradiction.
\end{proof}

\begin{prop}
Let $R$ be any order in a simple Artinian ring $A$ and suppose that $v:\mathcal{F}(R)\to\Gamma$ is an apv. Then:
\begin{enumerate}[label=(\arabic*)]
\item $P=\left\{ a\in A\mid v(RaR)>0\right\}$ defines a prime $(P,A^{P})$ with $\left\{a\in A\mid v(RaR)\geq 0\right\}\subseteq A^{P}$.
\item if $v(I)=\left\{a\in A\mid aI\subseteq P\right\}$ and $\Gamma$ is a group, then $\left\{a\in A\mid v(RaR)\geq 0\right\}=A^{P}$.
\end{enumerate}
\end{prop}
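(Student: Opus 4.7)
For part (1), the plan is to verify the three requirements in turn. First I check that $P$ is closed under addition, which follows from $R(a+b)R\subseteq RaR+RbR$ via (APV4) and (APV2): $v(R(a+b)R)\geq \min\{v(RaR),v(RbR)\}>0$. Next I establish the submultiplicative estimate $v(RabR)\geq v(RaR)+v(RbR)$ for $a,b\in A$, obtained from $RabR\subseteq RaR\cdot RbR$ using (APV1) and (APV4). This immediately yields $\{a\in A\mid v(RaR)\geq 0\}\subseteq A^{P}$: for such $a$ and any $p\in P$, $v(RapR)\geq v(RaR)+v(RpR)>0$ gives $ap\in P$, and $pa\in P$ symmetrically. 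Since $r\in R$ forces $RrR\subseteq R$ and hence $v(RrR)\geq v(R)=0$, this in particular yields $R\subseteq A^{P}$, so $P\subseteq A^{P}$ and $P$ is a two-sided ideal of $A^{P}$ by the defining property of $A^{P}$. Properness $P\subsetneq A^{P}$ is automatic since $1\in R\subseteq A^{P}$ but $v(R)=0$.

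The main content is the external prime axiom. Suppose $xA^{P}y\subseteq P$ with $x,y\in A$. Since $R\subseteq A^{P}$ one gets $xRy\subseteq P$, and then every element $r_{1}(xr_{2}y)r_{3}$ of $RxR\cdot RyR=RxRyR$ lies in $RPR\subseteq P$, so this fractional ideal is contained in $P$. The plan is then to promote this set-theoretic containment to the strict inequality $v(RxR\cdot RyR)>v(R)$ in $\Gamma$; (APV1) then gives $v(RxR)+v(RyR)>0$, and translation monotonicity in the totally ordered $\Gamma$ finishes the argument: if both $v(RxR)\leq 0$ and $v(RyR)\leq 0$, then $v(RxR)+v(RyR)\leq v(RyR)\leq 0$, a contradiction, so $x\in P$ or $y\in P$. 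Part (2) is a direct computation: with $v(I)=\{x\in A\mid xI\subseteq P\}$ the order on $\Gamma$ is inclusion, so I need $P=v(R)\subseteq v(RaR)$ for $a\in A^{P}$. For $p\in P$ and $r_{1},r_{2}\in R\subseteq A^{P}$ (using part (1)), iterated application of $PA^{P}\subseteq P$ and $Pa\subseteq P$ yields $pr_{1}ar_{2}\in P$, so $pRaR\subseteq P$ and $p\in v(RaR)$.

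The step I expect to be the main obstacle is promoting the set-theoretic inclusion $RxR\cdot RyR\subseteq P$ to the strict $\Gamma$-inequality $v(RxR\cdot RyR)>0$. The naive estimate using a regular element $u$ of this fractional ideal gives only $v(RuR)\geq v(RxR\cdot RyR)$, which is the wrong direction. One has to exploit how $v$ behaves on the principal sub-generators and the total ordering of $\Gamma$, in the spirit of the minimum-value-of-generators argument used near the end of the proof of Theorem~\ref{apv}; everything else is a mechanical unwinding of the apv axioms.
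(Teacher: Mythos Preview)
Your outline for part~(1) is exactly the paper's argument: additivity of $P$ via (APV2), $R\subseteq A^{P}$ and hence $\{a:v(RaR)\geq 0\}\subseteq A^{P}$ via the submultiplicative estimate, and then the prime axiom via $xRy\subseteq P\Rightarrow v(RxRyR)>0\Rightarrow v(RxR)+v(RyR)>0$. The step you single out as the obstacle---promoting the set-theoretic containment $RxRyR\subseteq P$ to the strict inequality $v(RxRyR)>0$---is precisely the one the paper asserts in a single word (``hence $v(RaRa'R)>0$'') with no further justification. So your proposal matches the paper's level of detail here; your instinct that this is the nontrivial point is well-founded, but the minimum-value-of-generators idea you gesture at would need a finiteness hypothesis (e.g.\ $R$ Noetherian, so that $RxRyR$ is a finite sum of principal sub-bimodules) to actually go through, and the proposition imposes none.

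For part~(2) your route differs from the paper's. The paper takes $a\in A^{P}$, uses the group hypothesis to find $J$ with $v(RaR)+v(J)=0$, argues that $v(RaR)<0$ forces $v(J)>0$ hence $J\subseteq P$, and then invokes the same ``contained in $P$ implies value $>0$'' step to get $v(RaRJ)>0$, a contradiction. You instead work directly with the concrete description $v(I)=(P:I)$ and the inclusion order: since $R\subseteq A^{P}$ and $a\in A^{P}$, repeated use of $P\cdot A^{P}\subseteq P$ gives $pRaR\subseteq P$ for every $p\in P$, so $P\subseteq v(RaR)$, i.e.\ $v(RaR)\geq v(R)=0$. This is shorter, avoids the flagged step entirely, and in fact never uses that $\Gamma$ is a group; it does, however, rely on reading the order on $\Gamma$ as set-inclusion, which is the order built in Theorem~\ref{apv} but is not explicitly restated as a hypothesis of part~(2).
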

\begin{proof}
\begin{enumerate*}[label=(\arabic*)]
\item For $a,b\in P$ we have $v(R(a+b)R)\geq\min\left\{v(RaR),v(RbR)\right\}$ which is strictly positive, so $a+b\in P$. Clearly, $P$ is an ideal in $A^{P}$ and $R\subseteq A^{P}$ since $v(R)=0$ and for all $r\in R$ and $p\in P$ we have $v(RrRRpR)=v(RpR)>0$. If $a,a'\in A$ are such that $aA^{P}a'\subseteq P$ then $aRa'\subseteq P$ hence $v(RaRa'R)>0$. From $v(RaR)+v(Ra'R)>0$ it follows that either $v(RaR)>0$ or $v(Ra'R)>0$, i.e. either $a\in P$ or $a'\in P$. If $v(RaR)\geq0$ for some $a\in A$ then for all $p\in P$ $$v(RaRpR)=v(RaR)+v(Rp)>0\quad\text{and}\quad v(RpRaR)=v(RpR)+v(RaR)>0$$ so $a\in A^{P}$. \\
\item Consider $a\in A^{P}$. $RaR$ is invertible in $\mathcal{F}(R)$ so there is a $J\in\mathcal{F}(R)$ with $v(RaR)+v(J)=0=v(J)+v(RaR)$, hence $v(JaR)=v(RaJ)=0$. If $v(RaR)<0$ then $v(J)>0$ or in other words $J\subseteq P$. But then $a\in A^{P}$ would give $RaRJ\subseteq P$ which implies $v(RaRJ)>0$ which is a contradiction. Therefore $v(RaR)\geq0$ and $A^{P}=\left\{a\in A\mid v(RaR)\geq0\right\}$.
\end{enumerate*}
\end{proof}

\section{Arithmetical pseudo-valuations on Dubrovin valuations}

For primes containing an order with commutative semigroup of fractional ideal, Van Geel (\cite{VanGeel}) introduced artithmetical pseudo-valuations, but this condition is very strong and reduces the applicability in practice to maximal orders and Dubrovin valuations in finite dimensional central simple algebras. For Dubrovin valuations on infinite dimensional csa the semigroup $\mathcal{F}(R)$ need not be commutative. The following facts are known about Dubrovin valuations on simple Artinian rings (see \cite{theBook}):
\begin{enumerate}[label=(D\arabic*),ref=(D\arabic*)]
\item $R$ is a (left and right) Goldie ring and a prime order of $A$;
\item $(P,R)$ is a localized prime of $R$;
\item $P=J(R)$ and it is the unique maximal ideal of $R$. Consequently, $1+P$ consists of units;
\item $\mathcal{F}(R)$ is linearly ordered;
\item finitely generated $R$-submodules of $A$ are cyclic.
\end{enumerate}

\begin{prop}\label{okfordv}
For a Noetherian Dubrovin valuation $R$ we have for all $I,J\in\mathcal{F}(R)$ that $IJ\subseteq P$ iff $JI\subseteq P$. 
\end{prop}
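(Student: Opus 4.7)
The plan is to prove the nontrivial direction $IJ\subseteq P \Rightarrow JI\subseteq P$ by contradiction and close with a Nakayama argument; the converse is symmetric. The three ingredients I would lean on are the linear ordering of $\mathcal{F}(R)$ from (D4), the uniqueness of $P$ as the maximal (and Jacobson) ideal from (D3), and the Noetherian hypothesis, which is what makes Nakayama applicable.

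Assume $IJ\subseteq P$ and suppose for contradiction that $JI\not\subseteq P$. Invoking (D4) twice, $JI$ is comparable with both $P$ and $R$. The first comparison forces $P\subsetneq JI$, and the second gives two cases. Either $JI\subseteq R$, in which case $JI$ is a two-sided ideal of $R$ strictly containing $P$, so $JI=R$ by (D3); or $R\subseteq JI$ outright. Both cases collapse to $R\subseteq JI$.

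Multiplying on the right by $J$ and chaining inclusions I would then derive
\[
J \;=\; RJ \;\subseteq\; (JI)J \;=\; J(IJ) \;\subseteq\; JP,
\]
where the final step is the standing hypothesis $IJ\subseteq P$. Since $J$ is a right $R$-module and $P\subseteq R$, also $JP\subseteq J$, so $J=JP$.

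To close, Noetherianness of $R$ implies that $J$ is finitely generated as a right $R$-module: pick a regular $r\in R$ with $rJ\subseteq R$; then $rJ$ is a finitely generated right ideal of $R$, whence $J=r^{-1}(rJ)$ is finitely generated on the right. Since $P=J(R)$ by (D3), Nakayama's lemma applied to $J=JP$ forces $J=0$, contradicting the requirement that $J\in\mathcal{F}(R)$ contain a regular element. The only real trick is spotting the chain $J\subseteq JIJ\subseteq JP$ that funnels both hypotheses into a Nakayama setup; everything else is bookkeeping with the linear order and the definition of a fractional ideal.
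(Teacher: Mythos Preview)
Your argument is correct and follows essentially the same route as the paper: use the linear ordering (D4) to pass from $JI\nsubseteq P$ to $R\subseteq JI$, obtain the chain $J=RJ\subseteq JIJ\subseteq JP\subseteq J$, and finish with Nakayama via the Noetherian hypothesis and $P=J(R)$. The only cosmetic difference is that the paper first invokes (D2) to assume without loss of generality that $I\subseteq P$, whereas you bypass this and use $IJ\subseteq P$ directly in the step $J(IJ)\subseteq JP$; your version is in fact slightly leaner since the (D2) reduction is not actually needed for the chain to go through.
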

\begin{proof}
From $IJ\subseteq P$ it follows that either $I\subseteq P$ or $J\subseteq P$ (by (D2)), assume without loss of generality $I\subseteq P$. By (D4), if $JI\nsubseteq P$ then $P\subsetneq JI$ so we have $R\subseteq JI\subseteq JP\subseteq J$ hence $J=RJ\subseteq JIJ\subseteq JP\subseteq J$ which gives $J=JP$. Since $R$ is an order, there is some regular $u\in R$ with $uJ\subseteq R$ and since $R$ is Noetherian $uJ=\sum a_{i}R$ for a finite set of $a_{i}$'s in $uJ$. Then also $J=\sum u^{-1}a_{i}R$, so $J$ is a finitely generated $R$-submodule of $A$. By Nakayama's lemma $J$ must be zero, which is a contradiction.
\end{proof}

\begin{cor}\label{tologroup}
If $R$ is a Noetherian Dubrovin valuation then $$v:\mathcal{F}(R)\to\Gamma: I\mapsto (P:I)=\left\{a\in A\mid aI\subseteq P\right\}$$ is an arithmetical pseudo-valuation and $\Gamma$ is a totally ordered group. Furthermore, $P=\left\{a\in A\mid v(RaR)>0\right\}$ and $R=\left\{a\in A\mid v(RaR)\geq 0\right\}$.
\end{cor}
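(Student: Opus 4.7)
The plan is to transport the construction of Theorem \ref{apv} to the Noetherian Dubrovin setting, with Proposition \ref{okfordv} replacing Lemma \ref{IJinPthenJIinP}(2). I define $v(I) := (P : I) = \{a \in A : aI \subseteq P\}$, take $\Gamma := v(\mathcal{F}(R))$ ordered by inclusion, and set $v(I) + v(J) := v(IJ)$. That $\Gamma$ is totally ordered, that $v$ is an apv, and that $P$ and $R$ are characterized via $v(RaR) > 0$ and $v(RaR) \geq 0$, all follow by reading off the proof of Theorem \ref{apv} line by line: every appeal there to the symmetry $IJ \subseteq P \Leftrightarrow JI \subseteq P$, previously supplied by inner-automorphism invariance, is now supplied by Proposition \ref{okfordv}, while the ambient facts $R = A^P$ and $1 + P \subseteq R^{\times}$ come from (D2) and (D3).

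What remains is to show $\Gamma$ is a group. By Proposition \ref{gammagroup} it suffices to produce, for every $I \in \mathcal{F}(R)$, a nonzero $y$ with $yI \subseteq R$ and $yI \nsubseteq P$. My strategy is to first establish that $P$ itself is invertible in $\mathcal{F}(R)$. Noetherianness of $R$ lets me apply Nakayama's lemma to obtain $P^{2} \subsetneq P$ (exactly as in Proposition \ref{okfordv}); (D5) then makes $P$ cyclic, say $P = R\pi$ for some regular $\pi \in R$, and $P^{-1} := \{a \in A : aP \subseteq R\}$ provides an inverse with $P P^{-1} = R$. Using (D4), every $I \in \mathcal{F}(R)$ sits at a well-defined level relative to the chain $\{P^{n}\}_{n \in \mathbb{Z}}$, and a Nakayama-style comparison (again using finite generation) forces $I$ to equal $R\pi^{n}$ up to a unit of $R$, for some integer $n$. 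Invertibility of an arbitrary $I$ then reduces to invertibility of $P$.

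The main obstacle is the noncommutativity: one has to verify that the cyclic generator of $P$ given by (D5) normalises $R$ (so that $\pi R = R\pi$ and $P$ is genuinely two-sidedly principal) and that the level argument does not collapse under the total order. Once these are in hand, the witness demanded by Proposition \ref{gammagroup} is immediate: writing $I = R\pi^{n} u$ with $u$ a unit, the element $y = \pi^{-n}$ (adjusted by $u^{-1}$ if necessary) satisfies $yI = Ru \nsubseteq P$, closing the argument.
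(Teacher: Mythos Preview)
Your transport of Theorem~\ref{apv} via Proposition~\ref{okfordv} is exactly what the paper does, and your identification of the ambient inputs ($R=A^{P}$, $1+P\subseteq R^{\times}$) is correct.

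For the group property, however, you take a substantially harder road than the paper. The paper observes that any $I\in\mathcal{F}(R)$ is finitely generated as a left $R$-module (pick regular $u$ with $uI\subseteq R$, use Noetherianity, then left-multiply by $u^{-1}$), hence cyclic by (D5): $I=Ru$ with $u$ regular, and automatically $RuR=Ru$ since $I$ is two-sided. Now the Dubrovin localized-prime property (D2) applied to $u$ (or simply the observation that $u^{-1}\in A$ and $Iu^{-1}=R$) immediately furnishes $a\in A$ with $Ia\subseteq R$ and $Ia\nsubseteq P$; symmetrically one finds $b$ with $bI\subseteq R$, $bI\nsubseteq P$. That is the witness Proposition~\ref{gammagroup} asks for, and the argument is finished.

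Your route instead aims to classify every fractional ideal as $R\pi^{n}$ up to a unit, i.e.\ to prove $\mathcal{F}(R)\cong\mathbb{Z}$ outright. That statement is true (and the paper establishes it later, in Section~5, using the Asano-order machinery), but it is strictly more than what is needed here, and the ``main obstacle'' you flag---showing $\pi R=R\pi$ and running the level argument without collapse---is a genuine gap that you do not close. So the proposal is completable in principle, but as written it trades the paper's two-line use of (D2)/(D5) for an unfinished structural classification.
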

\begin{proof}
Using the preceding proposition instead of \ref{IJinPthenJIinP} we can repeat the proof of theorem \ref{apv}. The only thing we need to prove is that $\Gamma$ is a group, so consider $I\in\mathcal{F}(R)$. By a similar argument as in the proof of proposition \ref{okfordv} it is finitely generated as a left $R$-ideal of $A$. By (D5) it is cyclic, in fact $I=Ru$ for some regular $u$ and thus $RuR=Ru$. Since $R$ is a Dubrovin valuation, there is some $a\in A$ with $ua\in R\setminus P$. Then $Rua\subseteq R$, $Rua\nsubseteq P$ and $Ia\subseteq R$, $Ia\nsubseteq P$. We can similarly find a $b$ such that $bI\subseteq R$ but $bI\nsubseteq P$. We can now repeat the last part of the proof of proposition \ref{gammagroup} to conclude that $\Gamma$ is a group.
\end{proof}

\begin{rem}
If $R$ is a Dubrovin valuation where $$v:\mathcal{F}(R)\to\Gamma: I\mapsto\left\{a\in A\mid aI\subseteq P\right\}$$ is a non-trivial arithmetical pseudo-valuation with values in a totally ordered group, then $IJ\subseteq P$ if and only if $JI\subseteq P$. Indeed, suppose $IJ\subseteq P$ and $JI\nsubseteq P$ then, as in proposition \ref{okfordv} we find $JP=J$ but then $v(P)=0$ which is impossible.
\end{rem}

If $R$ is non-Noetherian, then $P=P^{2}$ is possible in which case no nice apv can exist since otherwise $v(P)=2v(P)$ which would imply $v(P)=0$. If we exclude this slightly pathological case, a nice apv does exist.

\begin{prop}
Let $R$ be a Dubrovin valuation with $\bigcap P^{n}=0$, then there is an apv as before.
\end{prop}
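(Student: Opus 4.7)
The plan is to imitate the proof of Corollary \ref{tologroup} (which in turn mirrors Theorem \ref{apv}), with a single modification: Proposition \ref{okfordv}, whose only use of the Noetherian hypothesis is the invocation of Nakayama's lemma, must be replaced by an analogue that uses $\bigcap P^{n}=0$ instead. Once that symmetry lemma is in place, the construction $v(I)=(P:I)$ and the verification of the apv axioms can be carried over verbatim.

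Concretely, I would first prove: for any $I,J\in\mathcal{F}(R)$, $IJ\subseteq P$ implies $JI\subseteq P$. Suppose for contradiction $IJ\subseteq P$ but $JI\nsubseteq P$. By (D2) either $I\subseteq P$ or $J\subseteq P$; assume without loss of generality $I\subseteq P$. Since $\mathcal{F}(R)$ is linearly ordered by (D4), $R$ and $JI$ are comparable, and since $JI$ properly contains $P$ while $P$ is the unique maximal ideal of $R$ by (D3), one concludes $R\subseteq JI$. Then $J=RJ\subseteq JIJ\subseteq JP\subseteq J$, forcing $J=JP$, and iterating $J=JP^{n}$ for every $n\geq 1$. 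Since $J\in\mathcal{F}(R)$, there is a regular $u\in R$ with $uJ\subseteq R$, and then $uJ=uJP^{n}=(uJ)P^{n}\subseteq P^{n}$ for all $n$, so $uJ\subseteq\bigcap P^{n}=0$. Regularity of $u$ in $A$ forces $J=0$, contradicting that $J$ contains a regular element.

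With this symmetry in hand, define $v\colon\mathcal{F}(R)\to\Gamma$ by $v(I)=\{a\in A\mid aI\subseteq P\}$ (equivalently $\{a\in A\mid Ia\subseteq P\}$) and proceed exactly as in Theorem \ref{apv}. The total ordering of $\Gamma$ under reverse inclusion, the well-definedness of $v(I)+v(J):=v(IJ)$, compatibility with $\leq$ on both sides, and the inequality $v(I+J)\geq\min\{v(I),v(J)\}$ all go through unchanged, each citation of Lemma \ref{IJinPthenJIinP} being replaced by the symmetry just established. The identifications $P=\{a\in A\mid v(RaR)>0\}$ and $R=\{a\in A\mid v(RaR)\geq 0\}$ follow as in the closing paragraph of Theorem \ref{apv}, using (D3) (so $1+P$ consists of units) and (D5) (finitely generated left $R$-submodules of $A$ are cyclic, so $RrR$ is generated by a single regular element picking up the minimal value).

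The principal obstacle is the symmetry step: in the Noetherian case one applies Nakayama to the finitely generated module $uJ$, but here $uJ$ need not be finitely generated, so one must instead push $J=JP^{n}$ directly into $\bigcap P^{n}$. The assumption $\bigcap P^{n}=0$ is exactly what makes this substitution work, and it is also precisely what excludes the $P=P^{2}$ obstruction flagged just before the proposition.
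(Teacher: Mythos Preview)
Your symmetry argument is correct and matches the paper's almost verbatim: from $J=JP$ you iterate to $J=JP^{n}$, multiply by a regular $u$ with $uJ\subseteq R$, and land in $\bigcap P^{n}=0$. With that in hand the construction $v(I)=(P:I)$ and the verification of the apv axioms do carry over as you say.

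The gap is in your claim that Corollary \ref{tologroup} can be imitated ``with a single modification''. In that corollary the Noetherian hypothesis is used \emph{twice}: once (via Proposition \ref{okfordv}) for the symmetry lemma, which you have fixed, and once in the proof that $\Gamma$ is a group. There the argument reads ``by a similar argument as in the proof of Proposition \ref{okfordv} it is finitely generated as a left $R$-ideal'', and only then is (D5) invoked to make $I$ cyclic, $I=Ru$, after which the Dubrovin property supplies $a$ with $ua\in R\setminus P$. Without the Noetherian assumption you have no reason for an arbitrary $I\in\mathcal{F}(R)$ to be finitely generated, so (D5) does not apply and this route to invertibility of $v(I)$ is blocked. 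Since ``an apv as before'' includes $\Gamma$ being a group (and the paper's own proof devotes its second paragraph to exactly this), your proposal is incomplete.

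The paper closes this gap by a genuinely different argument: from $\bigcap P^{n}=0$ one has $P\neq P^{2}$, so (by \cite{theBook}, 1.5.4 and 1.5.6) $P=Rp=pR$ is principal and $PP^{-1}=P^{-1}P=R$. Then for any $I\in\mathcal{F}(R)$ one has $(R:I)I\subseteq R$; if also $(R:I)I\subseteq P$, multiplying by $P^{-1}$ gives $P^{-1}(R:I)\subseteq(R:I)$, whence $(R:I)I\subseteq P^{2}$, and iterating forces $(R:I)I\subseteq\bigcap P^{n}=0$, a contradiction. Thus $(R:I)I\subseteq R$ but $(R:I)I\nsubseteq P$, and one finishes via Proposition \ref{gammagroup}. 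You should either supply this argument or some other substitute for the finite-generation step.
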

\begin{proof}
If $I,J\in\mathcal{F}(R)$ with $IJ\subseteq P$ but $JI\nsubseteq P$. The same argument as in proposition \ref{okfordv} leads to $J=JP$ so $J=JP^{n}$ for any $n$. There is some regular $u\in R$ with $uJ\subseteq P$ hence $uJ=uJP^{n}\subseteq P^{n+1}$. But then $uJ=0$ which implies $J=0$ and this is a contradiction. Now we can proceed as in corollary \ref{tologroup} to find an apv with values in a semigroup. 

The only thing we need to prove is that $\Gamma$ is a group. Lemma 1.5.4 in \cite{theBook} says that $P=Rp=pR$ for some regular $p\in P$. Since $P$ is principal as a left $R$-ideal, lemma 1.5.6 in the same source gives $PP^{-1}=R=P^{-1}P$ (here $P^{-1}=\left\{a\in A\mid PaP\subseteq P\right\}$). Consider now a fractional $R$-ideal $I$. Clearly, $(R:I)I\subseteq R$. Suppose we also have $(R:I)I\subseteq P$, then $P^{-1}(R:I)I\subseteq R$ hence $P^{-1}(R:I)\subseteq (R:I)$ so $P^{-1}(R:I)I\subseteq P$. This means $(R:I)I\subseteq P^{2}$ and by repeating this process we find $(R:I)I\subseteq P^{n}$ for any $n$, but $(R:I)I\subseteq \bigcap P^{n}=0$ which is a contradiction. Therefore, $(R:I)I\subseteq R$ but $(R:I)I\nsubseteq P$, so there exists an $a\in (R:I)$ such that $aI\subseteq R$ but $aI\nsubseteq P$.
\end{proof}

The following characterizes Noetherian Dubrovin valuation rings within the class of rank one Dubrovin valuations. The result may be known but we found no reference for it in the literature. Recall that the rank of a Dubrovin valuation ring is the maximal length of a chain of Goldie prime ideals in the ring. A rank $1$ Dubrovin valuation ring on a simple Artinian $A$ is a maximal subring of $A$.

\begin{prop}\label{NoeDub}
For a Dubrovin valuation $R$ on a simple Artinian ring $A$ the following are equivalent:
\begin{enumerate}[label=(\arabic*),ref=(\arabic*)]
\item $R$ is Noetherian. \label{(1)}
\item $R$ has rank $1$ and $P\neq P^{2}$. \label{(2)}
\item $R$ has rank $1$ and $\bigcap P^{n}=0$. \label{(3)}
\end{enumerate}
\end{prop}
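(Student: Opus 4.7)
The plan is to prove the cycle $(1)\Rightarrow(2)\Rightarrow(3)\Rightarrow(1)$, relying on the apv machinery of Corollary~\ref{tologroup} and the proposition immediately preceding this statement.

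For $(1)\Rightarrow(2)$, Noetherianity makes $P$ finitely generated, so $P=P^{2}$ combined with $P\subseteq J(R)$ would give $P=0$ by Nakayama, excluded since $R\subsetneq A$; hence $P\neq P^{2}$. For the rank, Corollary~\ref{tologroup} produces an apv $v:\mathcal{F}(R)\to\Gamma$ with $\Gamma$ a totally ordered group, and a standard Artin--Rees/Krull intersection argument available in Noetherian Dubrovin valuations yields $\bigcap P^{n}=0$. Any Goldie prime $Q$ with $0\subsetneq Q\subsetneq P$ would then, by linearity of $\mathcal{F}(R)$, either lie in every $P^{n}$ (impossible as $Q\neq 0=\bigcap P^{n}$) or contain some $P^{n}$, in which case primeness forces $P\subseteq Q$, contradicting $Q\subsetneq P$. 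For $(2)\Rightarrow(3)$ I would invoke the standard fact from \cite{theBook} that $\bigcap P^{n}$ is a Goldie prime ideal in any Dubrovin valuation; by rank one it is $0$ or $P$, and if equal to $P$ we would have $P=P^{2}$, contradicting (2).

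The interesting direction is $(3)\Rightarrow(1)$. The preceding proposition gives an apv $v:\mathcal{F}(R)\to\Gamma$ to a totally ordered group, and via Lemma~1.5.4 of \cite{theBook} provides $P=Rp=pR$ for some regular $p$. Rank one makes $\Gamma$ Archimedean (convex subgroups correspond to Goldie primes), and since $R/P$ is simple Artinian there is no two-sided ideal strictly between $P$ and $R$, so $v(P)$ is the least positive element of $\Gamma$; hence $\Gamma\cong\mathbb{Z}$ and $\mathcal{F}(R)=\{P^{n}\mid n\in\mathbb{Z}\}$ with each $P^{n}=Rp^{n}=p^{n}R$ principal. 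The successive quotients $P^{n}/P^{n+1}$ are isomorphic as left $R$-modules to the simple Artinian $R/P$ via $r+P\mapsto rp^{n}+P^{n+1}$, hence of finite length. Left Noetherianity then follows by chasing an ascending chain of left ideals through the filtration $R\supseteq P\supseteq P^{2}\supseteq\cdots$: in every layer the induced chain stabilizes, and $\bigcap P^{n}=0$ patches these layerwise stabilizations into stabilization of the original chain. The right side is symmetric because $Rp=pR$.

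The main obstacle is making the layerwise patching fully rigorous without assuming $P$-adic completeness, since the stabilization indices in successive layers may a priori drift to infinity. A cleaner alternative, which I would prefer in a final write-up, is the reduction via the structure theorem for rank one Dubrovin valuations of a simple Artinian ring: such $R$ is Morita equivalent to a rank one valuation ring $W$ of a skewfield, so Noetherianity of $R$ reduces to Noetherianity of $W$; in the skewfield case $W/m_{W}$ is a skewfield with no proper nonzero left ideals, so the shift $L\mapsto p^{-k}L$ (with $k$ maximal such that $L\subseteq m_{W}^{k}$) exhibits every proper left ideal of $W$ as a power of $m_{W}$, and the ACC is then immediate from $\bigcap m_{W}^{n}=0$.
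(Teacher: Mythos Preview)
Your arguments for $(1)\Rightarrow(2)$ and $(2)\Rightarrow(3)$ are correct, though organised differently from the paper. For $(1)\Rightarrow(2)$ the paper is more direct: writing $P=Rp$, any nonzero prime $Q\subsetneq P$ has the form $Q=IP$ for some ideal $I$; primeness of $Q$ with $P\nsubseteq Q$ forces $I\subseteq Q$, hence $Q=QP$, and Nakayama gives $Q=0$. You instead first establish $\bigcap P^{n}=0$ and then use linearity of ideals to exclude an intermediate prime; this works, but effectively proves (3) en route to (2). For $(2)\Rightarrow(3)$ both you and the paper invoke the same fact from \cite{theBook} that $\bigcap P^{n}$ is prime.

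The real issue is $(3)\Rightarrow(1)$, where your approach is genuinely different from the paper's and, as you yourself concede, incomplete. Layerwise stabilization of an ascending chain in the finite-length quotients $R/P^{n}$ does not propagate to stabilization in $R$ without a uniform bound on the stabilization index, and $\bigcap P^{n}=0$ alone does not supply one; this is a real gap, not a technicality. Your Morita alternative would close it cleanly, and your endgame in the skewfield case is correct, but the reduction step---that a rank one Dubrovin valuation in an arbitrary simple Artinian ring is Morita equivalent to a valuation ring of a skewfield---is not a standard result in this generality and you give no reference. Outside the finite-dimensional central simple setting this is precisely the kind of structural claim one cannot take for granted, and the paper explicitly works in that broader setting.

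The paper's argument for $(3)\Rightarrow(1)$ is entirely different and much shorter: rank $1$ gives $O_{l}(I)=O_{r}(I)=R$ for every $R$-ideal $I$, and then Proposition~1.5.8 of \cite{theBook} shows that a non-principal $I$ would force $II^{-1}=P$ with $P=P^{2}$, contradicting $\bigcap P^{n}=0$. Hence every $R$-ideal is principal and Noetherianity follows. This is a one-line structural argument leaning on the cited result, whereas your approach attempts to build Noetherianity from the $P$-adic filtration by hand; the former is shorter but less self-contained, the latter is more transparent in intent but, as written, not complete.
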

\begin{proof}
\begin{enumerate*}[leftmargin=*]
\item[$(1)\Rightarrow(2)$] If $R$ is Noetherian then all ideals (and $R$-ideals of $A$) are principal, so $P\neq P^{2}$. Suppose $0\neq Q$ is another prime ideal in $P$. Let $P=Rp$, then $Q=Ip$ for some non-trivial ideal $I$ of $R$. $Q=IP$ yields $I\subseteq Q$ since $Q$ is prime and $P\nsubseteq Q$. Hence $Q=IP\subseteq QP\subseteq Q$ implies $Q=QP$ which implies $Q=0$ by Nakayama's lemma.

\item[$(2)\Rightarrow(3)$] If $P\neq P^{2}$, then $\bigcap P^{n}\neq P$ which, by Lemma 1.5.15 in \cite{theBook}, gives $\bigcap P^{n}=0$.

\item[$(3)\Rightarrow(1)$] Since $\bigcap P^{n}=0$, $P\neq P^{2}$. Since $R$ is rank $1$, $R=O_{l}(I)=O_{r}(I)$ for any $R$-ideal $I$. By proposition 1.5.8 in \cite{theBook}, it follows that if $I$ is not principal, then $II^{-1}=P$ and $P=P^{2}$ which s a contradiction. 
\end{enumerate*}

\end{proof}

\section{Divisors of Bounded Krull orders}

We consider a prime Noetherian ring $R$. It is an order in a simple Artinian ring $A=Q(R)$, the classical ring of fractions. If $R$ is a maximal order then the set of divisorial $R$-ideals of $A$ (cfr. \cite{theBook}) is a group and since $R$ is Noetherian it is an Abelian group generated by the maximal divisorial ideals and every maximal divisorial ideal is a minimal prime ideal. 

Recall that an order is an \emph{Asano order} if every ideal $I\neq 0$ of $R$ is invertible and it is a \emph{Dedekind order} if it is an hereditary Asano order. If $R$ is an Asano order satisfying the ascending chain condition on ideals, then $\mathcal{F}(R)$ is the Abelian group generated by maximal ideals and every maximal ideal is a minimal nonzero prime ideal. Any bounded Noetherian order is a Dedekind order.

A semi-local order $R$ in a simple Artinian $A$ is a Noetherian Asano order if and only if it is a principal ideal ring. If $R$ is a Dubrovin valuation ring of $A$ then $R$ is a maximal order if and only if $\mathrm{rk}(R)=1$ and $R$ is Asano if and only if it is a principal ideal ring, so a Noetherian Dubrovin valuation ring is a Noetherian maximal order and an Asano order, i.e. a principal ideal ring.

\begin{prop}
If $R$ is a Noetherian Dubrovin valuation then the corresponding apv takes values in $\mathbb{Z}$.
\end{prop}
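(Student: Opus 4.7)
The plan is to exploit the structure theorem recalled in the paragraph immediately above the proposition: for an Asano order satisfying the ACC on ideals, $\mathcal{F}(R)$ is the free abelian group generated by the maximal two-sided ideals. A Noetherian Dubrovin valuation ring is, as the same paragraph notes, an Asano principal ideal ring, and it has $P$ as its unique maximal ideal. So $\mathcal{F}(R)$ should be cyclic on $P$, and one just has to check that $v(P)$ is a strictly positive generator of $\Gamma$.

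The first step is to invoke proposition \ref{NoeDub} to get $P\neq P^{2}$. Combined with the structure theorem, this forces the class of $P$ to have infinite order in $\mathcal{F}(R)$, so $\mathcal{F}(R)\cong\mathbb{Z}$ with generator $P$ and in particular every fractional $R$-ideal is of the form $P^{n}$ for a unique $n\in\mathbb{Z}$.

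The second step is to transport this along $v$. By corollary \ref{tologroup} the map $v:\mathcal{F}(R)\to\Gamma$ is an apv with $\Gamma$ a totally ordered group, so (APV1) gives $v(P^{n})=nv(P)$ and hence $\Gamma=\mathbb{Z}v(P)$. The third step is to check $v(P)>0$, so that $\Gamma=\mathbb{Z}v(P)$ is order-isomorphic to $\mathbb{Z}$ rather than trivial: (APV4) gives $v(P)\geq v(R)=0$ since $P\subseteq R$, and $v(P)\neq 0$ because otherwise (APV1) would force $v(I)=nv(P)=0$ for every $I=P^{n}$, contradicting the characterisation $P=\{a\in A\mid v(RaR)>0\}$ from corollary \ref{tologroup} together with $P\neq 0$.

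No step presents a real obstacle; the argument is essentially a recollection of the Asano structure theorem combined with proposition \ref{NoeDub}. The only subtle point is the appeal to the assertion that $\mathcal{F}(R)$ is freely generated by the maximal ideals for an Asano order with ACC, but this is standard and is already quoted in the surrounding text.
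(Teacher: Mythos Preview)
Your proposal is correct and follows essentially the same route as the paper: both invoke the Asano structure theorem to see that $\mathcal{F}(R)$ is generated by the unique maximal ideal $P$, and then argue that this cyclic group is infinite (the paper says ``the value group is necessarily torsion-free'', you use $P\neq P^{2}$ from proposition~\ref{NoeDub}). Your version is somewhat more careful in distinguishing $\mathcal{F}(R)$ from its image $\Gamma$ under $v$ and in verifying $v(P)>0$, whereas the paper's one-line proof tacitly identifies the two; but the underlying argument is the same.
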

\begin{proof}
Since $R$ is a Noetherian Asano order, $\mathcal{F}(R)$ is generated by the maximal ideals of $R$, but since $P$ is the unique maximal ideal and the value group is necessarily torsion-free, we have $\mathcal{F}(R)=\mathbb{Z}$.
\end{proof}

Recall that an order $R$ in a simple Artinian ring $A$ is said to be a Krull order if it is a maximal order and it is $\tau$-Noetherian (see \cite{theBook}, definition 2.2.2). A Noetherian order $R$ in a simple Artinian $A$ is a Krull order if and only if it is a maximal order.

\begin{thm}
Let $R$ be a prime Noetherian ring and an order in $A=Q(R)$. Suppose every minimal nonzero prime ideal is localizeable, $R_{p}$ is a Dubrovin valuation ring for every $p\in X^{1}(R)=\left\{\text{minimal nonzero prime ideals of }R\right\}$ and $R=\bigcap R_{p}$, then $R$ is a bounded Krull domain.
\end{thm}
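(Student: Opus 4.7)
The plan is to verify the three conditions defining a bounded Krull domain: $R$ is a maximal order, $R$ is $\tau$-Noetherian, and $R$ is bounded. The middle condition is automatic from the Noetherian hypothesis, so only maximality and boundedness require argument.

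For maximality I would use the characterization that $R$ is a maximal order iff $O_l(I) = O_r(I) = R$ for every fractional $R$-ideal $I$, where $O_l(I) = \{a \in A : aI \subseteq I\}$. Each $R_p$ is Noetherian (being the Ore localization at $\mathcal{C}(p)$ of the Noetherian ring $R$) and a Dubrovin valuation by hypothesis; by Proposition~\ref{NoeDub} it has rank $1$, and hence is itself a maximal order of $A$ (as recalled in the preamble to this section). Given $a \in O_l(I)$, the localization $I_p := IR_p = R_p I$ is a fractional $R_p$-ideal, and $aI \subseteq I$ implies $aI_p = (aI)R_p \subseteq IR_p = I_p$, so maximality of $R_p$ gives $a \in O_l(I_p) = R_p$. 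Intersecting over $p$ yields $a \in R$, and the argument for $O_r(I)$ is symmetric. Being Noetherian and maximal, $R$ is then a Krull order.

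For boundedness I would first treat each $R_p$ locally. By Corollary~\ref{tologroup} and Proposition~\ref{NoeDub}, $\mathcal{F}(R_p) \cong \mathbb{Z}$ and $R_p$ is a principal ideal ring with $P_p = \pi R_p = R_p \pi$ for a normalizing $\pi$. Given a regular $c \in R_p$, write $v_p(R_p c^{-1} R_p) = -m$ with $m \geq 0$; then $c^{-1} \in P_p^{-m}$, so $\pi^m c^{-1} \in R_p$, i.e.\ $\pi^m \in cR_p$, and since $\pi$ is normalizing, $P_p^m = \pi^m R_p \subseteq cR_p$. To globalize, fix a regular $c \in R$ and observe that $F := \{p \in X^1(R) : c \in p\}$ is finite, since each $p \in F$ is a minimal prime over the nonzero two-sided ideal $RcR$ and a Noetherian ring has only finitely many minimal primes over a given ideal. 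For $p \notin F$, $c$ is a unit in $R_p$ so $cR_p = R_p$; for $p \in F$, choose $J_p = P_p^{m_p} \subseteq cR_p$ as above. Set $J := \bigcap_{p \in F}(J_p \cap R)$. Each $J_p \cap R$ is a nonzero two-sided ideal of $R$ (a regular $d \in J_p$ can be multiplied by a suitable $s \in \mathcal{C}(p)$ to land in $R \cap J_p$), and a finite intersection of nonzero two-sided ideals in the prime ring $R$ is nonzero (their product is nonzero and contained in the intersection). Finally $J \subseteq \bigcap_p (cR_p \cap R) = cR$, the last equality because $x \in \bigcap_p cR_p \cap R$ forces $c^{-1} x \in \bigcap_p R_p = R$, i.e.\ $x \in cR$. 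Hence $cR$ contains the nonzero two-sided ideal $J$, so $R$ is bounded. The main obstacle is this boundedness step, in particular the local computation inside each $R_p$ and ensuring the globally constructed ideal $J$ remains nonzero.
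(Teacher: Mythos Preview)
Your maximality argument is sound and matches the paper's: each $R_p$ is Noetherian (as a localization of a Noetherian ring), hence a rank~$1$ Dubrovin valuation and therefore a maximal order; the intersection $R=\bigcap R_p$ is then maximal, so $R$ is a Krull order.

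The boundedness argument, however, has a genuine gap. The implication ``$c\notin p \Rightarrow c$ is a unit in $R_p$'' is false. Take $R=M_2(k[[t]])$ with $p=M_2(tk[[t]])$ and $c=\begin{pmatrix}1&0\\0&t\end{pmatrix}$: then $c$ is regular and $c\notin p$ (so your $F$ is empty), yet $c^{-1}=\begin{pmatrix}1&0\\0&t^{-1}\end{pmatrix}\notin R_p=R$. In this situation your ideal $J$ becomes the empty intersection $J=R$, while $cR\subsetneq R$, so the containment $J\subseteq\bigcap_p cR_p$ fails. Your local computation $P_p^{m_p}\subseteq cR_p$ is correct for every $p$, but what you actually need is that the set $F'=\{p:c\text{ is not a unit in }R_p\}=\{p:m_p>0\}$ is finite, and this set can strictly contain $\{p:c\in p\}$. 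The paper does not argue this directly either: having established that $R$ is a Krull order, it invokes the standard result (Theorem~2.2.16 in \cite{theBook}) that in a Krull order each regular element is a non-unit in only finitely many $R_p$, and then cites Theorem~2.2.20 for boundedness. If you want a self-contained argument you must supply an independent proof that $F'$ is finite; the Noetherian minimal-prime count over $RcR$ does not suffice.
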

\begin{proof}
Since $R$ is Noetherian, every $R_{p}$ is Noetherian too and since it is a Dubrovin valuation it must be an Asano order hence a principal ideal ring. Since every $R_{p}$ is a maximal order, so is $R$. As a Noetherian maximal order, $R$ is a Krull domain and by theorem 2.2.16 in \cite{theBook} every regular element is a non-unit in only finitely many of the $R_{p}$'s (for $p$ maximal divisorial, i.e. $p\in X^{1}(R)$). Theorem 2.2.20 in the same source gives the result.
\end{proof}

\begin{rem}
If $R$ is a bounded Krull domain then $R=\bigcap_{p\in X^{1}(R)} R_{p}$ (cfr. \cite{theBook} 2.2.18 \& 2.2.19). Then every $R_{p}$ is rank one Dubrovin valuation with $P\neq P^{2}$ (2.2.16 in the aforementioned source), so by proposition \ref{NoeDub} it is Noetherian. This gives a correspondence between $p\in X^{1}(R)$ and apvs.
\end{rem}

A \emph{divisor} of a bounded Krull domain $R$ is an element in the free Abelian group $\bigoplus_{p\in X^{1}(R)}\mathbb{Z}p$. To any $I\in\mathcal{F}(R)$ we can associate the divisor $\mathrm{div}(I)=\sum v_{p}(I_{p})p$ where $I_{p}=R_{p}I$. This definition is justified by the following:

\begin{prop}
Suppose $R_{p}$ is Noetherian. If $I$ is an $R$-ideal of $A$, then $I_{P}$ is an $R_{P}$-ideal of $A$.
\end{prop}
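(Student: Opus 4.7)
The plan is to exploit that $R_P$, being a Noetherian Dubrovin valuation, is a principal ideal ring and an Asano order (as recalled just before this proposition), so that every fractional $R_P$-ideal takes the form $R_P p^m = p^m R_P$ for a uniformizer $p$ of $P$. This will force $I_P$ to be principal and two-sided essentially for free.

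The first step is to show that $I_P$ is finitely generated as a left $R_P$-module. Since $I$ admits a regular left denominator $u \in R$, we have $I \subseteq u^{-1}R$, and so $I_P = R_P I \subseteq R_P u^{-1} R_P$. The latter is the inverse in the group $\mathcal{F}(R_P)$ of the principal bilateral ideal $R_P u R_P$ (indeed $u R_P u^{-1} = R_P$ gives $(R_P u R_P)(R_P u^{-1} R_P) = R_P$), hence is itself a principal fractional $R_P$-ideal, and in particular cyclic as a left $R_P$-module. Since $R_P$ is Noetherian by hypothesis, the submodule $I_P$ is then finitely generated as a left $R_P$-module.

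Next, by property (D5) above, $I_P$ is cyclic: $I_P = R_P x$ for some $x \in A$. Since $I$ contains a regular element of $R$, which is regular in the simple Artinian ring $A$ and lies in $R_P x$, the generator $x$ must itself be regular in $A$. The key identity $R_P x = x R_P$ now follows from the principal-ideal structure: in a Noetherian Dubrovin valuation, any principal left ideal $R_P c$ generated by a regular $c \in R_P$ coincides with a bilateral $R_P p^m = p^m R_P$, and so $R_P c = c R_P$; multiplying on both sides by $c^{-1}$ gives $R_P c^{-1} = c^{-1} R_P$. Writing $x = c^{-1} d$ with $c, d$ regular in $R_P$ (using that $R_P$ is an order in $A$), we conclude $R_P x = R_P c^{-1} d = c^{-1} R_P d = c^{-1} d R_P = x R_P$, so $I_P$ is an $R_P$-bimodule.

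Finally, $I_P$ contains a regular element (inherited from $I$), and since $R_P$ is an order in $A$ and $x \in A$, there is a regular $c \in R_P$ with $cx \in R_P$; then $c I_P = c R_P x = R_P c x \subseteq R_P$ exhibits a common $R_P$-denominator, completing the verification that $I_P$ is a fractional $R_P$-ideal. The main obstacle is the bilaterality $R_P c = c R_P$ for regular $c \in R_P$; this is handled by invoking the principal ideal ring structure of Noetherian Dubrovin valuations recalled at the start of the section, which forces every principal one-sided ideal to be two-sided.
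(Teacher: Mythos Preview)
Your argument has a genuine gap. The crucial claims $uR_Pu^{-1}=R_P$ (Step~1) and $R_Pc=cR_P$ for every regular $c\in R_P$ (Step~3) amount to saying that $R_P$ is invariant under conjugation by units of $A$. That is a property of valuation rings in skewfields (the invariant primes of Section~2), but it fails for Dubrovin valuation rings in genuinely simple Artinian rings. Take $R_P=M_2(k[[t]])\subset A=M_2(k((t)))$ and $c=\mathrm{diag}(1,t)$: then $R_Pc$ consists of matrices whose second column lies in $tk[[t]]$, while $cR_P$ consists of matrices whose second row lies in $tk[[t]]$; these are distinct, and neither is a two-sided ideal, so $R_Pc$ is not of the form $R_Pp^m$. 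Thus ``principal ideal ring'' does not force one-sided principal ideals to be bilateral, and your deduction $R_Px=xR_P$ for the cyclic generator $x$ of $I_P$ is unjustified. The same example shows $uR_Pu^{-1}\neq R_P$, so your identification of $R_Pu^{-1}R_P$ with $(R_PuR_P)^{-1}$ in Step~1 also fails (indeed here $R_PuR_P=R_P$ while $R_Pu^{-1}R_P=t^{-1}R_P$).

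The paper sidesteps this by never leaving the world of two-sided objects over $R$. Starting from a regular $u$ with $uI\subseteq R$, one passes to the two-sided ideal $RuR$ and to $RuR\cdot I$, which is again a two-sided ideal of $R$; localizing two-sided $R$-ideals at $P$ yields two-sided $R_P$-ideals, and in the Noetherian Dubrovin ring $R_P$ these are powers of $P_P$ and hence principal with $R_Pu'=u'R_P$. Inverting the (bilateral) factor coming from $RuR$ then exhibits $I_P$ itself as a fractional $R_P$-ideal. The point you are missing is that bilaterality must be imported from $I$ and $RuR$ via localization, not manufactured inside $R_P$ from an arbitrary regular generator.
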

\begin{proof}
Let $u$ be regular in $R$ with $uI\subseteq R$, then $RuRI\subseteq R$ and $RuR=Ru'$ for some regular $u'\in R$. Then $R_{p}u'I$ is the localization of $Ru'I$ and it is an ideal of $R_{p}$. $R_{p}u'$ is the localization of $Ru'$ so it is also an ideal of $R_{p}$, hence $R_{p}u'I=R_{p}u'R_{p}I=R_{p}u'I_{p}$ is an ideal of $R_{p}$. Now $U'I_{p}\subseteq R_{p}$, i.e. $I_{p}$ is an $R_{p}$-ideal of $A$.
\end{proof}

Observe that since any regular element is a non-unit in only finitely many localizations, $\mathrm{div}(I)$ contains only finitely many non-zero terms. Moreover, $\mathrm{div}(I)\leq\mathrm{div}(J)$ is and only if $v_{p}(I)\leq v_{p}(J)$ for all $p\in X^{1}(R)$. By putting $I^{*}=\bigcap_{p\in X^{1}(R)}I_{p}$ we find $v_{p}(I)=v_{p}(I*)$. For bounded Krull domains we can consider $\mathrm{div}:\mathcal{F}(R)\to\mathrm{Div}(R)$. This is a group morphism of Abelian groups and it is order reversing in the sense that $I\subseteq J$ yields $\mathrm{div}(I)|\mathrm{div}(J)$. Further divisor theory requires a version of the aproximation theorems. This is work in progress.

\end{document}